\newtheorem{THM}{Theorem}[section]
\newtheorem{LEM}[THM]{Lemma}
\newcommand\abs[1]{\left\lvert #1\right\rvert}
\def\ord(#1){\abs{#1}}
\def\shift(#1)(#2){\!\!\downarrow\!{}^{#1}_{\raise .01ex\vbox to 0pt{\vss\hbox{$\scriptstyle #2$}}}\,}
\def\ucl(#1){\lfloor #1 \rfloor}% up-closure
\def\dcl(#1){\lceil #1 \rceil}% down-closure
\def\specrel#1#2{\mathrel{\mathop{\kern0pt #1}\limits_{#2}}}
\def\F{\mathcal F}
\def\lowfwd #1#2#3{{\mathop{\kern0pt #1}\limits^{\kern#2pt\raise.#3ex
\vbox to 0pt{\hbox{$\scriptscriptstyle\rightarrow$}\vss}}}}
\def\lowbkwd #1#2#3{{\mathop{\kern0pt #1}\limits^{\kern#2pt\raise.#3ex
\vbox to 0pt{\hbox{$\scriptscriptstyle\leftarrow$}\vss}}}}
\def\ve{\kern-1pt\lowfwd e{1.5}2\kern-1pt}
\def\ev{\lowbkwd e02}
\def\vedash{{\mathop{\kern0pt e\lower.5pt\hbox{${}% logically \v(e')
     \scriptstyle'$}}\limits^{\kern0pt\raise.02ex
     \vbox to 0pt{\hbox{$\scriptscriptstyle\rightarrow$}\vss}}}}
\def\evdash{{\mathop{\kern0pt e\lower.5pt\hbox{${}% logically \v(e')
     \scriptstyle'$}}\limits^{\kern0pt\raise.02ex
     \vbox to 0pt{\hbox{$\scriptscriptstyle\leftarrow$}\vss}}}}
\def\vex{\lowfwd {e_x}02}
\def\evx{\lowbkwd {e_x}02}
\def\veminus{{\mathop{\kern0pt e\raise3pt\hbox{${}
     \scriptscriptstyle-$}}\limits^{\kern0pt\lower2pt
     \vbox to 0pt{\hbox{$\scriptscriptstyle\rightarrow$}\vss}}}}
\def\evminus{{\mathop{\kern0pt e\raise3pt\hbox{${}
     \scriptscriptstyle-$}}\limits^{\kern0pt\lower2pt
     \vbox to 0pt{\hbox{$\scriptscriptstyle\leftarrow$}\vss}}}}
\def\vf{\kern-1pt\lowfwd f{1.5}2\kern-1pt}
\def\fv{\lowbkwd f02}
\def\vr{\lowfwd r{1.5}2}
\def\rv{\lowbkwd r02}
\def\vro{\lowfwd {r_0}12}
\def\rvo{\lowbkwd {r_0}02}
\def\vri{\lowfwd {r_i}12}
\def\rvi{\lowbkwd {r_i}02}
\def\vrdash{{\mathop{\kern0pt r\lower.5pt\hbox{${}% logically \v(e')
     \scriptstyle'$}}\limits^{\kern0pt\raise.02ex
     \vbox to 0pt{\hbox{$\scriptscriptstyle\rightarrow$}\vss}}}}
\def\rvdash{{\mathop{\kern0pt r\lower.5pt\hbox{${}% logically \v(e')
     \scriptstyle'$}}\limits^{\kern0pt\raise.02ex
     \vbox to 0pt{\hbox{$\scriptscriptstyle\leftarrow$}\vss}}}}
\def\vrddash{{\mathop{\kern0pt r\lower.5pt\hbox{${}% logically \v(e')
     \scriptstyle''$}}\limits^{\kern0pt\raise.02ex
     \vbox to 0pt{\hbox{$\scriptscriptstyle\rightarrow$}\vss}}}}
\def\rvddash{{\mathop{\kern0pt r\lower.5pt\hbox{${}% logically \v(e')
     \scriptstyle''$}}\limits^{\kern0pt\raise.02ex
     \vbox to 0pt{\hbox{$\scriptscriptstyle\leftarrow$}\vss}}}}
\def\vrone{\lowfwd {r_1}02}
\def\rvone{\lowbkwd {r_1}{-1}2}
\def\vrtwo{\lowfwd {r_2}12}
\def\rvtwo{\lowbkwd {r_2}02}
\def\vs{\lowfwd s{1.5}1}
\def\sv{\lowbkwd s{1.5}1}
\def\vso{\lowfwd {s_0}11}
\def\svo{\lowbkwd {s_0}02}
\def\vsone{\lowfwd {s_1}11}
\def\svone{\lowbkwd {s_1}02}
\def\vsi{\lowfwd {s_i}11}
\def\vsidash{{\mathop{\kern0pt s_i\kern-3.5pt\lower.3pt\hbox{${}
     \scriptstyle'$}}\limits^{\kern0pt\raise.02ex
     \vbox to 0pt{\hbox{$\scriptscriptstyle\rightarrow$}\vss}}}}
\def\svj{\lowbkwd {s_j}02}
\def\vS{{\vec S}} % {{\fwd S3}}
\def\vSr{{\vec S}_{\raise.1ex\vbox to 0pt{\vss\hbox{$\scriptstyle\ge\vr$}}}}
\def\vSrone{{\vec S}_{\raise.1ex\vbox to 0pt{\vss\hbox{$\scriptstyle\ge\vrone$}}}}
\def\vSdash{{\mathop{\kern0pt S\lower-1pt\hbox{${}% logically \v(e')
     \scriptstyle'$}}\limits^{\kern2pt\raise.1ex
     \vbox to 0pt{\hbox{$\scriptscriptstyle\rightarrow$}\vss}}}}
\def\vsdash{{\mathop{\kern0pt s\lower.5pt\hbox{${}% logically \v(e')
     \scriptstyle'$}}\limits^{\kern0pt\raise.02ex
     \vbox to 0pt{\hbox{$\scriptscriptstyle\rightarrow$}\vss}}}}
\def\svdash{{\mathop{\kern0pt s\lower.5pt\hbox{${}% logically \v(e')
     \scriptstyle'$}}\limits^{\kern0pt\raise.02ex
     \vbox to 0pt{\hbox{$\scriptscriptstyle\leftarrow$}\vss}}}}
\def\vU{{\vec U}} % {{\fwd S3}}
\def\sub{\subseteq}
\def\supe{\supseteq}
\def\sm{\smallsetminus}
\def\td{tree-decom\-po\-si\-tion}
\newcommand\COMMENT[1]{}
\def\?#1{\vadjust{\vbox to 0pt{\vss\vskip-8pt\leftline{%
     \llap{\hbox{\vbox{\pretolerance=-1
     \doublehyphendemerits=0\finalhyphendemerits=0
     \hsize16truemm\tolerance=10000\small
     \lineskip=0pt\lineskiplimit=0pt
     \rightskip=0pt plus16truemm\baselineskip8pt\noindent
     \hskip0pt        %(without this, the first word is never hyphenated!)
     #1\endgraf}\hskip7truemm}}}\vss}}}
\title{Tangle-tree duality in abstract separation systems}
\author{Reinhard Diestel\\
  Mathematisches Seminar, Universit\"at Hamburg
  \and 
  Sang-il Oum%
%  \thanks{Supported by Basic Science Research
%    Program through the National Research Foundation of Korea (NRF)
%    funded by  the Ministry of Science, ICT \& Future Planning
%    (2011-0011653).}
  \\
  Discrete Mathematics Group, Institute for Basic Science (IBS)\\
  Department of Mathematical Sciences, KAIST}
\begin{document}
\abovedisplayshortskip=-3pt plus3pt
\belowdisplayshortskip=6pt

\maketitle

\begin{abstract}\noindent
  We prove a general width duality theorem for combinatorial structures with well-defined notions of cohesion and separation. These might be graphs or matroids, but can be much more general or quite different. The theorem asserts a duality between the existence of high cohesion somewhere local and a global overall tree structure.

We describe cohesive substructures in a unified way in the format of tangles: as orientations of low-order separations satisfying certain consistency axioms. 
   These axioms can be expressed without reference to the underlying structure, such as a graph or matroid, but just in terms of the poset of the separations themselves. This makes it possible to identify tangles, and apply our tangle-tree duality theorem, in very diverse settings.\looseness=-1

Our result implies all the classical duality theorems for width parameters in graph minor theory, such as path-width, tree-width, branch-width or rank-width. It yields new, tangle-type, duality theorems for tree-width and path-width. It implies the existence of width parameters dual to cohesive substructures such as $k$-blocks, edge-tangles, or given subsets of tangles, for which no width duality theorems were previously known.

Abstract separation systems can be found also in structures quite unlike graphs and matroids. For example, our theorem can be applied to image analysis by capturing the regions of an image as tangles of separations defined as natural partitions of its set of pixels. It can be applied in big data contexts by capturing clusters as tangles. It can be applied in the social sciences, e.g.\ by capturing as tangles the few typical mindsets of individuals found by a survey. It could also be applied in pure mathematics, e.g.\ to separations of compact manifolds.
   \end{abstract}

\section{Introduction}\label{sec:intro}

There are a number of theorems in the structure theory of sparse graphs that assert a duality between high connectivity present somewhere in the graph and an overall tree structure. For example, a graph either has a large complete minor or a \td\ into torsos of essentially bounded genus, but not both~\cite{DiestelBook16, GMXVI}. And it either has a large grid minor or a \td\ into parts of bounded size, but not both~\cite{DiestelBook16, GMV}. Let us loosely refer to such highly cohesive substructures of a graph, defined in terms of subsets of its vertices together with some required edges, as {\em concrete\/} highly cohesive substructures~({\em HCS\/}s).\looseness=-1

An example of a concrete HCS for which no dual notion of global tree structure has been known is that of a $k$-{\em block\/} \cite{ForcingBlocks, mader78}: a set of at least $k$ vertices no two of which can be separated in the graph by deleting fewer than~$k$ vertices.%
   \COMMENT{}

Conversely, there are  a number of so-called width parameters for graphs, invariants whose boundedness asserts that the graph has some kind of global tree structure, for which there are no obvious dual concrete HCSs.

Amini, Mazoit, Nisse, and Thomass\'e~\cite{MazoitPartition} addressed this latter problem in a broad way: they showed how to construct, for many width parameters including all the then known ones, dual concrete HCSs akin to brambles (see~\cite{DiestelBook16}).%
 \COMMENT{}
   For each parameter, the existence of such an HCS forces this parameter to be large, and conversely, whenever one of these width parameters is large there exists a concrete bramble-type HCS to witness this.

\medbreak

In one of their seminal papers on graph minors~\cite{GMX}, Robertson and Seymour introduced a very different way to capture high cohesion somewhere in a graph, which they call {\em tangles\/}. The basic idea behind these is as follows. Given a concrete HCS $X$ in a graph~$G$ and a low-order separation,%
 \COMMENT{}
   most of~$X$ will lie on one of its two sides: otherwise $X$ could not be highly cohesive.%
 \COMMENT{}
   In this way~$X$, whatever it is, {\em orients\/} each of the low-order separations of~$G$ towards one of its sides, the side that contains most of~$X$. These orientations of all the low-order separations will be `consistent' in various ways, since they all point towards~$X$: no two of them, for example, will point away from each other.

Robertson and Seymour~\cite{GMX} noticed that these orientations of all the low-order separations of~$G$ captured most of what they needed to know about~$X$. Consequently, they defined a {\em tangle of order~$k$} in a graph as a way to orient all its separations of order~$<k$, consistently in some precise sense not relevant here.\looseness=-1

The notion of a tangle brought with it a shift of paradigm in the connectivity theory of graphs~\cite{ReedConnectivityMeasure}: we can now think of a consistent orientation of all the low-order separations of a graph as a `highly cohesive substructure' in its own right: no longer a concrete one, but an {\em abstract\/}~HCS. Such abstract HCSs, though maybe unfamiliar at first, are often `deeper' than concrete ones, because they pick out only the essential information. But they are also easier to work with: one no longer has to worry about the details, say, of where exactly in the graph a subdivided grid has all its connecting paths. And most importantly, they are able to capture HCSs that are inherently fuzzy. For example, the additional detail that a subdivided grid contains over the tangle it defines is not only superfluous but can be misleading: each individual branch vertex can, and typically will, lie on the {\em wrong\/} side of {\em some\/} low-order separation, the side that does not contain most of the grid. (Consider, for example, the separation defined by the four neighbours of a given vertex in an actual grid.)

Our first aim in this paper is to do for abstract HCSs in graphs and matroids the converse%
   \COMMENT{}
   of what Amini et al.\ did for concrete ones: starting from a unified definition of abstract HCSs, we prove a general duality theorem that describes corresponding tree structures to witness the  nonexistence of these HCSs.

Generalizing the specific notion of a tangle from~\cite{GMX}, we shall define types of abstract HCSs to be called `$\F$-tangles', where $\F$ encodes some particular type of consistency. Thus, an $\F$-tangle in a graph will be a way to orient all its separations of order~$<k$ (for some~$k$) consistently in a sense specified by~$\F$: different notions of consistency will give rise to different sets~$\F$ and result in different $\F$-tangles. But we shall prove one unified duality theorem saying that, for every suitable~$\F$, a~given graph either has an $\F$-tangle or a global tree structure that clearly precludes the existence of an $\F$-tangle.

Our duality theorem will easily imply the two known tangle-type duality theorems from graph minor theory: the classical Robertson-Seymour one for tangles and branch-width in graphs~\cite{GMX}, and its analogue for matroids~\cite{BranchDecMatroids, GMX}. This has been shown in detail in~\cite{TangleTreeGraphsMatroids}.

It will also imply new, tangle-type, dual\-ity theorems for all the other classical width parameters, such as tree- and path-width: for each of these we shall find an~$\F$, encoding some specific type of consistency, such that the graphs where this parameter is large are precisely those with an $\F$-tangle.%
 \COMMENT{}%
   \COMMENT{}
   The known duality theorems for these width parameters, in terms of concrete HCSs, will follow from our duality theorem in terms of abstract HCSs, but not conversely. This, too, has been shown in~\cite{TangleTreeGraphsMatroids}.

Our result will further imply duality theorems for $k$-blocks, the main concrete HCS for which no duality theorem has been known, and for any specified type of classical tangles (rather than all of them). This has been done in~\cite{ProfileDuality}.

Finally, our duality theorem has recently found an unexpected more fundamental application. The emerging theory of abstract separation systems and their tangles~-- see~\cite{duality1inf, SeparationsOfSets, CarmesinToTshort, carmesinhalinconj, confing, CanonicalTreesofTDs, EndsAndTangles, TreeSets, ProfileDuality, AbstractTangles, ProfiniteASS, TangleTreeGraphsMatroids, ToTfromTTD, FiniteSplinters, InfiniteSplinters, weighted_deciders_AIC, DirectedPDs, JoshRefining, JoshUnified, TreelikeSpaces, KneipProfiniteTreeSets, EndsAsTangles}~-- used to rest on two pillars: an abstract version~\cite{ProfilesNew} of Robertson and Seymour's tree-of-tangles theorem for graphs~\cite{GMX}, and the abstract tangle-tree duality theorem proved here. But, very recently, Elbracht, Kneip and Teegen~\cite{ToTfromTTD} have been able to derive the abstract tree-of-tangles theorem from the tangle-tree duality theorem. With this deduction, only one pillar remains: the result proved in this paper, and re-proved in~\cite{ToTfromTTD} in a slightly stronger form.

\medbreak

While the study of tangles as abstract HCSs marked a shift of paradigm from the earlier studies of concrete HCSs, there has since been another major shift of paradigm: from concrete to abstract {\it separations\/}. Separations in graphs~-- as well as traditional tangles and their dual branch decompositions~-- are defined in terms of the graph's edges. But when we proved our duality theorem for graphs we found that, surprisingly, we needed to know only how these separations relate to each other, not how they relate to the graph which they separate.

Our main result, therefore, is now a duality theorem for {\em abstract separation systems\/}. Very roughly, these are partially ordered sets (reflecting the natural partial ordering between separations in graphs and matroids), with an order-reversing involution that reflects the flip $(A,B)\mapsto (B,A)$ of a graph separation.

Both tangles in graphs and their dual tree structure can be expressed in terms of just this partial ordering of their separations. Indeed, the consistency requirement for tangles, that no three `small' sides of its oriented separations shall cover the graph, can be replaced by the requirement that whenever a tangle $\tau$ contains two oriented separations, $(A,B)$ and $(C,D)$ say, it also contains their supremum $(A\cup C, B\cap D)$ as long as this is oriented by $\tau$ at all, i.e., has order $<k$ if $\tau$ is a $k$-tangle. (Note the similarity to ultrafilters, a standard kind of abstract HCSs in infinite contexts.)
   And the \td s or branch-decompositions dual to graph tangles can be described purely in terms of the separations too, those that correspond to the edges of their decomposition trees, where the requirement that these edges form a tree can be replaced by requiring that those separations must be nested~-- which can in turn be expressed just in terms of our poset: two separations are {\em nested\/} if they have comparable orientations.

While our duality theorem for these abstract separation systems implies all the duality theorems mentioned so far, by applying it to separations in graphs or matroids, it can also be applied in very different contexts. These applications are surveyed in~\cite{TangleBookPreview,TanglesSocial,TangleBook}, in a style aimed at non-mathematicians in the sciences and in the quantitative social sciences, but also accessible to readers of this paper.\looseness=-1

As a generic such application outside mathematics consider cluster analysis. The bipartitions of  a (large data) set~$D$ form a separation system: they are partially ordered by inclusion of their sides, and the involution of flipping the sides of the bipartition inverts this ordering. Depending on the application, some ways of cutting the data set in two will be more natural than others, which gives rise to a cost function on these separations of~$D$.%
 \COMMENT{}
   Taking this cost of a separation as its `order' then gives rise to tangles:%
   \COMMENT{}
   abstract HCSs signifying clusters. Unlike clusters defined by simply specifying a subset of~$D$, clusters defined by tangles are allowed to be fuzzy (as in our earlier grid example)~-- which much enhances their real-world relevance. See~\cite{TangleClusteringWeakStrong} for more.

If the cost function on the separations of our data set is submodular~-- which in practice is not normally a severe restriction~-- our duality theorem can be applied to these tangles~\cite{TangleTreeGraphsMatroids}. For every integer~$k$, the application will either find a cluster of order at least~$k$ or produce a nested `tree' set of bipartitions, all of order~${<k}$, which together witness that no such cluster exists. An example from image analysis, with a cost function chosen so that the clusters become the visible regions in a picture, is given in~\cite{MonaLisa}.
   In an example from sociology, the yes/no questions of a political or social survey form a separation system whose tangles capture any existing {\it mindsets\/}: typical ways of answering its questions~\cite{TanglesSocial,TangleBook}. Tangles can identify such mindsets in a quantitative and precise way even if there does not exist any one set of complete answers that occurs more often than others. Our theorem, in addition, determines how dominant or prevalent such opinions are in the population surveyed, by finding the maximum $k$ for which there exist mindsets that define a $k$-tangle of answers.

There are also potential applications in pure mathematics. For a very simple example, consider a triangulation of a topological sphere. This can be cut in two, in many ways, by closed paths along the edges of the triangulation, i.e., by cycles in its 1-skeleton. The lengths of these paths or cycles%
   \COMMENT{}
   define a submodular order function on the separations of our sphere that they define. Our duality theorem then says that, for every integer~$k$, there either exists a region dense enough that no closed path of length~$<k$ can cut the sphere so as to divide this region roughly in half,%
   \COMMENT{}
   or there exists a collection of non-crossing paths each of length~$<k$ which, between them, cut up the entire sphere in a tree-like way (with a ternary tree)%
   \COMMENT{}
   into single triangles. If we do this in a geometric disc, where the edges in these triangulations have lenths, the inverse limits of these triangulations under refinement will have tangles describing visible `blobs', of varying granularity, of these geometric discs, and our theorem will tell us donw to which small granularity a given blob can be refined.%
   \COMMENT{}

Our abstract tangle-tree duality theorem will come in two flavours, `weak' and `strong'. Our {\it weak duality theorem\/}, presented in Section~\ref{sec:weak}, will be easy to prove but has no direct applications. It will be used as a stepping stone for the {\it strong duality theorem\/}, our main result, which we prove in Section~\ref{sec:strong}. In Section~\ref{sec:essence} we present a refinement of the strong duality theorem.

\section{Terminology and basic facts}\label{sec:def}

Our aim in this section is to introduce the reader to just enough of the terms and basic theory of abstract separation systems~\cite{ASS} to read this paper. In order to facilitate the build-up of enough intuition to flesh out the rather technical proof of our main theorem, however, we begin with a special case: some simple observations about separations of sets. The abstract definitions to follow will then be presented fairly concisely.

The ArXiv version~\cite{TangleTreeAbstractArXiv3} of this paper follows a slightly different approach here by seeking to motivate our abstract definitions before they are introduced, rather than just illustrating them afterwards by pointing out what they mean for separations of sets. Readers interested in why exactly our abstract notions are what they are, are invited to consults~\cite{TangleTreeAbstractArXiv3} for more insight.

\subsection{Separations of sets}

Let us start with a common generalization of separations in graphs on the one hand, and bipartitions of sets on the other. A~\emph{separation of a set} $V$ is a set $\{A,B\}$ such that ${A\cup B=V\!}$. Note that $A\cap B$ may be non-empty. It will usually be small, but technically also $\{V,V\} = \{V\}$ is allowed. The ordered pairs $(A,B)$ and $(B,A)$ are the two {\it orientations\/} of the (unoriented) separation~$\{A,B\}$ of~$V\!$.%
   \COMMENT{}
   (The separation $\{V,V\}$ has only the one orientation~$(V,V)$.) The {\em oriented separations\/} of~$V$ are the orientations of its separations: the ordered pairs $(A,B)$ such that ${A\cup B = V\!}$. Later, we shall use the term `separation' informally for either oriented or unoriented separations, as long as the context is clear.

   \begin{figure}[htpb]
   \centering%\vskip-6pt
   	  \includegraphics{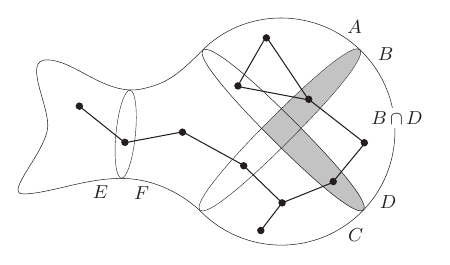}
   	        \vskip-6pt
\caption{Oriented separations $(E,F) < (A,B)$ and $(B,A) < (F,E)$ of a graph. The separations $(A,B)$ and~$(C,D)$ are incomparable.}
   \vskip-0pt\label{fishlemmafig}
   \end{figure}

Mapping every oriented separation $(A,B)$ to its {\it inverse\/} $(B,A)$ is an involution on the set of all oriented separations of~$V\!$. Note that it reverses the partial ordering of this set given by
 \[(A,B)\le (C,D) :\Leftrightarrow A\subseteq C \text{ and } B\supseteq D,\]
 since $(A,B)\le (C,D)$ is equivalent to $(B,A)\ge (D,C)$; see Figure~\ref{fishlemmafig}. Informally, we think of $(A,B)$ as \emph{pointing towards}~$B$ and \emph{away from}~$A$.%
   \COMMENT{}

\subsection{Separation systems}

Generalizing these properties of separations of sets, we now give an axio\-matic definition of {\em abstract separations\/}: these do not have to `separate' anything, but their properties will resemble the properties of separations of sets we just outlined above.%
 \COMMENT{}
   We shall define oriented (abstract) separations first, and then pair them with their inverses to form unoriented separations as quotients of oriented ones.

A~{\em separation system\/} $(\vS,\le\,,\!{}^*)$ is a partially ordered set $\vS$ with an order-reversing involution~*. Its elements are called {\em oriented separations\/}. When a given element of $\vS$ is denoted as~$\vs$, its {\em inverse\/}~$\vs{}^*$ will be denoted as~$\sv$, and vice versa. The assumption that * be {\em order-reversing\/} means that, for all $\vr,\vs\in\vS$,
\begin{equation}\label{invcomp}
\vr\le\vs\ \Leftrightarrow\ \rv\ge\sv.
\end{equation}
An (unoriented) {\em separation\/} is a set of the form $\{\vs,\sv\}$, and then denoted by~$s$. We call $\vs$ and~$\sv$ the {\em orientations\/} of~$s$. We say that $\vr$ {\em points towards\/}~$s$, and $\rv$ {\em points away from\/}~$s$, if $\vr\le\vs$ or $\vr\le\sv$. In Figure~\ref{fishlemmafig}, for example, the oriented separation $(E,F)$ points towards the separations $\{A,B\}$ and~$\{C,D\}$.

The set of all unoriented separations $s=\{\vs,\sv\}\sub\vS$ will be denoted by~$S$. If $\vs=\sv$, we call both $\vs$ and $s$ {\em degenerate\/}. A~set~$V\!$, clearly, has exactly one degenerate oriented separation: the separation~$(V,V)$.

When a separation is introduced notationally ahead of its elements, and denoted by a single letter~$s$, say, then its elements will subsequently be denoted as $\vs$ and~$\sv$.%
 \COMMENT{}
   Given a set $S'\sub S$ of unoriented separations, we write $\vSdash := \bigcup S'\sub\vS$%
   \COMMENT{}
   for the set of all the orientations of its elements. With the ordering and involution induced from~$\vS$, this is again a separation system.%
 \COMMENT{}

Separations of sets, and their orientations, are clearly an instance of this if we identify $\{A,B\}$ with $\{(A,B),(B,A)\}$.

If there are binary operations $\vee$ and~$\wedge$ on our separation system~$\vS$ such that $\vr\vee\vs$ is the supremum and $\vr\wedge\vs$ the infimum of $\vr$ and~$\vs$ in~$\vS$, we call $(\vS,\le\,,\!{}^*,\vee,\wedge)$ a {\em universe\/} of (oriented) separations. By~\eqref{invcomp}, it satisfies De~Mor\-gan's law:
\begin{equation}\label{deMorgan}
   (\vr\vee\vs)^* =\> \rv\wedge\sv.
\end{equation}%
   \COMMENT{}

The oriented separations of a set~$V$ form such a universe: if $\vr = (A,B)$ and $\vs = (C,D)$, say, then $\vr\vee\vs := (A\cup C, B\cap D)$ and $\vr\wedge\vs := (A\cap C, B\cup D)$ are again oriented separations of~$V$, and are the supremum and infimum of $\vr$ and~$\vs$, respectively. Similarly, the oriented separations of a graph (see~\cite{DiestelBook16}) form a universe of separations (see Figure~\ref{fishlemmafig}). Its oriented separations of order~$<k$ for some fixed~$k$, however, form a separation system~$\vS_k$ inside this universe that may not itself be a universe (with the same definition of $\vee$~and~$\wedge$). This is because the separations $(A\cup C, B\cap D)$ and $(A\cap C, B\cup D)$ may have an order greater than~$k$, and then fail to lie in~$\vS_k$.%
   \COMMENT{}

\subsection{Small and trivial separations}

A separation $\vr\in\vS$ is {\it small\/} if $\vr\le\rv$. The set of small separations is closed down in~$\vS$: if $\vs$ is small then so is any $\vr\le\vs$, because $\vr\le\vs\le\sv\le\rv$ by~\eqref{invcomp}. The small separations of a set~$V\!$ are those of the form~$(A,V)$.

A separation $\vr\in\vS$ is {\it trivial in~$\vS$}, and its inverse~$\rv$ {\em co-trivial\/}, if there exists $s \in S$ such that $\vr < \vs$ as well as $\vr < \sv$.%
 \COMMENT{}
    Such an $s$ is a {\em witness\/} of $\vr$ and its triviality. The trivial separations of a set~$V\!$, in the system~$\vS$ of all its separations, are those of the form $\vr = (X,V)$ for which there exists $s = \{A,B\}\in S\sm\{r\}$ with $X\sub A\cap B$.

All trivial separations are small: if $s$ witnesses the triviality of~$\vr$, then $\vr < \vs$ as well as  $\vr < \sv$, and hence $\vr < \vs < \rv$ by~\eqref{invcomp}. As these inequalities are strict, trivial separations are never degenerate.

%All trivial separations are small, but small separations need not be trivial.\penalty-200 While small separations can have a small inverse (which makes them degenerate), co-trivial separations cannot be trivial.

Small but nontrivial separations can exist but are rare: only the maximal small separations in~$\vS$ can be nontrivial. Indeed, if $\vs$ is small then every $\vr < \vs$ is not only small but in fact trivial, since $\vr < \vs\le\sv$. As trivial separations are small, this means that they, too, are closed down in~$\vS$. We thus have two down-closed subsets of~$\vS$: the set of trivial separations, and the (possibly) slightly larger set of small separations.%
   \COMMENT{}

\subsection{Nestedness and consistency}

Two separations $r,s\in S$ are {\em nested\/} if they have comparable orientations; otherwise they \emph{cross}. Two oriented separations $\vr,\vs$ are {\em nested\/} if $r$ and~$s$ are nested.%
   \footnote{Terms introduced for unoriented separations may be used informally for oriented separations too if the meaning is obvious, and vice versa.}%
   \COMMENT{}
   Thus, two nested oriented separations are either comparable, or point towards each other, or point away from each other. A~set of separations is {\em nested\/} if every two of its elements are nested. In Figure~\ref{fishlemmafig}, the separations $\{A,B\}$ and~$\{C,D\}$ cross but are nested with~$\{E,F\}$.

A set $O\sub \vS$ of oriented separations is {\em antisymmetric\/} if it does not contain the inverse of any of its nondegenerate elements.%
   \COMMENT{}
   It is \emph{consistent} if there are no distinct $r,s\in S$ with orientations $\vr < \vs$ such that $\rv,\vs\in O$.%
   \COMMENT{}
   In other words, a set of oriented separations is {\em consistent\/} if no two of its elements that are orientations of distinct separations point away from each other.

An \emph{orientation} of a set~${S}$ of separations is a set $O\sub{\vS}$ that contains for every $s\in{S}$ exactly one of its orientations $\vs,\sv$. A \emph{partial orientation} of~${S}$ is an orientation of a subset of~${S}$, i.e., an antisymmetric subset of~$\vS$.

Every consistent orientation of $S$ contains all separations $\vr$ that are trivial in~$\vS$, because it cannot contain their inverse~$\rv$: if the triviality of $\vr$ is witnessed by $s\in S$, say, then $\rv$ would be inconsistent with both $\vs$ and~$\sv$. It is not hard to show that every consistent partial orientation of $S$ containing no co-trivial $\rv\in\vS$ extends to a consistent orientation of all of~$S$; see~\cite{TreeSets}.

\subsection{Stars of separations}

A~family $(\,\vsi \mid i\in I\,)$, possibly empty, of nondegenerate oriented separations is a \emph{multistar of separations} if they point towards each other, that is, if $\vsi\le\svj$ for all distinct $i,j\in I$ (Figure~\ref{fig:star}). Note that if a multistar contains a separation $\vs$ more than once then $\vs$ must be small.

   \begin{figure}[htpb]
   \centering%\vskip-6pt
   	  \includegraphics{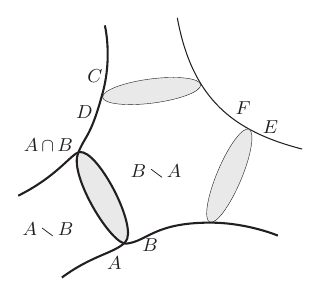}
      \vskip-9pt
   	  \caption{The separations $(A,B),(C,D),(E,F)$ form a star.}
   \vskip-3pt\label{fig:star}
   \end{figure}

Multistars in which each element occurs only once are called {\em stars\/}. To avoid notational hairsplitting, we think of stars as the obvious sets rather than as families, and say that a multistar $(\,\vsi \mid i\in I\,)$ {\em induces\/} the star $\{\,\vsi \mid i\in I\,\}$ obtained from it by forgetting the multiplicities of its elements.

Multistars of separations are clearly nested. They are also consistent: if $\rv,\vs$ lie in the same multistar we cannot have $\vr < \vs$, since also $\vs\le \vr$ by the multistar property.

Note that a multistar~$\sigma$ need not, by definition, be antisymmetric.%
   \COMMENT{}
   But if it is not, i.e.\ if $\{\vs,\sv\}\sub\sigma$, say, then any other $\vr\in\sigma$ will be trivial, witnessed by~$s$. Hence most of the stars we have to deal with will in fact be antisymmetric, but it is important to keep this example in mind as a pathological case that can, and will, occur.

Given a tree~$T$ (which, by definition~\cite{DiestelBook16}, has at least one node),%
   \COMMENT{}
  there is a \emph{natural partial ordering} on the set
    $$\vec E(T) := \{\, (x,y) : \{x,y\}\in E(T)\,\}$$
 of its oriented edges defined by letting $(x,y) < (u,v)$ if $\{x,y\}\ne \{u,v\}$ and the unique $\{x,y\}$--$\{u,v\}$ path in $T$ joins $y$ to~$u$ (see Figure~\ref{Stree}). For each node $t$ of~$T$, we call the set
 $$\vec F_t := \{(x,t) : xt\in E(T)\}$$
 of its incoming oriented edges the {\em oriented star at~$t$} in~$T$. This is also a star in the separation system $(\vec E(T),\le,{}^*)$, where $\le$ comes from the natural partial ordering on~$\vec E(T)$ and ${}^*\colon (x,y)\mapsto (y,x)$ flips the orientations of edges.

\subsection{\boldmath $S$-trees}

Let $(\vS,\le\,,\!{}^*)$ be a separation system.  An \emph{${S}$-tree\/} is a pair $(T,\alpha)$ of a tree~$T$ and a function $\alpha\colon\vec E(T)\to \vS$ that commutes with the involutions, i.e.,\ satisfies $\alpha(\ev) = \alpha(\ve)^*$ for all $\ve\in\vec E(T)$. If $T$ has an edge and we consider it as rooted at a leaf~$x$, then this implicitly defines its oriented edge emanating from~$x$ as~$\vex$.

For every node~$t\in T$, the families $(\,\alpha(\ve)\mid\ve\in\vec F_t\,)$%
   \COMMENT{}
   and the sets $\alpha(\vec F_t)$ of separations in~$\vS$ are said to be {\em associated with}~$t$ in~$(T,\alpha)$. If all the sets $\alpha(\vec F_t)$ are elements of some set~$\F$%
   \COMMENT{}
   we say that $(T,\alpha)$ is an $S$-tree {\em over}~$\F$. If the elements of $\F$ are stars, we also say that $(T,\alpha)$ is an $S$-tree {\em over stars.}

If $\alpha$ preserves the natural ordering on $\vec E(T)$, i.e., if for all $\ve,\vf\in\vec E(T)$ with $\ve\le\vf$ we have $\alpha(\ve)\le\alpha(\vf)$ in~$\vS$ (Fig.~\ref{Stree}), we call $(T,\alpha)$ {\em order-respecting\/}.

   \begin{figure}[htpb]
\centering%\vskip-6pt
   	  \includegraphics{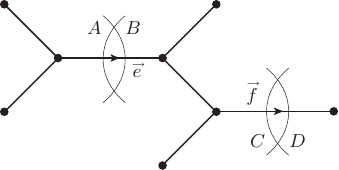}
   	  \caption{If $(T,\alpha)$ is order-respecting, then $\ve < \vf$ implies $(A,B)\le (C,D)$.}
   \label{fig:Stree}\vskip-6pt\label{Stree}
   \end{figure}

Note that the map $\alpha$ in an order-respecting $S$-tree $(T,\alpha)$ need not preserve strict inequalities: it can happen for $\ve < \vf$ that $\alpha(\ve) = \alpha(\vf)$. Similarly, while the sets $\vec F_t\sub\vec E(T)$ are stars, by definition, their images $(\,\alpha(\ve)\mid\ve\in\vec F_t\,)$ as families will, in general, only be multistars. The sets $\alpha(\vec F_t)\sub\vS$, then, are the stars in~$\vS$ which these multistars induce.

While order-respecting $S$-trees are clearly over stars, $S$-trees over stars need not be order-respecting. For example, let $T_3$ be obtained from the 3-star with centre~$t$ and leaves $x_1,x_2,x_3$ by subdividing each edge $x_i t$ by a new vertex~$y_i$. Let $\alpha_3$ map all the oriented edges $(y_i, t)$ to the same%
   \COMMENT{}
   separation~$\vs$, and the edges $(x_i,y_i)$ to separations~$\vri<\vs$, all nondegenerate. Then~$(T_3,\alpha_3)$ is an $S$-tree over stars. In particular, $\alpha_3(\vec F_t) = \{\vs\}$ is a star, even though $(\alpha_3(y_1,t),\alpha_3(y_2,t),\alpha_3(y_3,t))$ is not a multistar (unless $\vs$ is small). So the separations $r_i$ need not be nested with each other: it is easy to think of examples where three separations $\vri < \vs$ ($i=1,2,3$) cross pairwise. And if the $\vri$ are not nested, then $(T_3,\alpha_3)$ will not be order-respecting.%
   \COMMENT{}

However, this example is essentially the only one. Indeed, let us call an $S$-tree $(T,\alpha)$ {\em redundant\/} if it has a node $t$ of~$T$ with distinct neighbours $t',t''$ such that $\alpha(t',t) = \alpha(t'',t)$; otherwise we call it {\em irredundant\/}. Figure~\ref{tdfig} shows an irredundant $S$-tree over stars, so $\alpha(\vec E(T))\sub\vS$ is nested.

   \begin{figure}[htpb]
\centering%\vskip-6pt
   	  \includegraphics{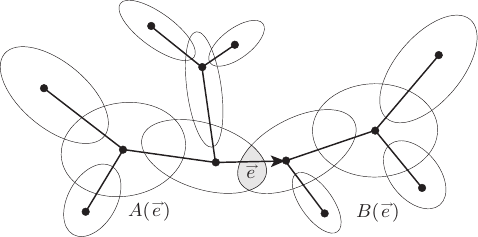}
   	  \caption{An irredundant $S$-tree $(T,\alpha)$ over stars in a graph. The ovals indicate the sets $\bigcap_{\ve\in\vec F_t} B(\ve)$ with $\alpha(\ve) =: (A(\ve),B(\ve))$, one for each node $t$ of~$T$.}
   \label{fig:Stree}\vskip-6pt\label{tdfig}
   \end{figure}

\begin{LEM}\label{preservele}{\rm \cite{TreeSets}}\COMMENT{}
Every irredundant $S$-tree $(T,\alpha)$ over stars is order-respecting. In particular, $\alpha(\vec E(T))$ is a nested set of separations in~$\vS$.
\end{LEM}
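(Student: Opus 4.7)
The plan is to establish the order-respecting property by induction on the length of the $e$--$f$ path in~$T$ between the two edges in question; the nestedness claim then follows at once, because for any two distinct edges $e, f$ of~$T$ we can orient each of them along the $e$--$f$ path toward the other to obtain orientations $\vr$ of~$e$ and $\vs$ of~$f$ with $\vr < \vs$ in the natural ordering on $\vec E(T)$, and order-respecting then gives $\alpha(\vr) \le \alpha(\vs)$, so that $\alpha(e)$ and $\alpha(f)$ have comparable orientations and are thus nested.

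For the base of the induction, suppose $\ve = (x,y) < \vf = (u,v)$ with $y = u$, so the $e$--$f$ path is the trivial path at~$y$. Then both $\ve = (x,y)$ and $\vf^* = (v,y)$ lie in $\vec F_y$, and the condition $\{x,y\} \ne \{u,v\}$ forces $x \ne v$, so they are orientations of distinct edges incident with~$y$. Irredundancy then yields $\alpha(\ve) \ne \alpha(\vf^*)$, making these two distinct elements of the star $\alpha(\vec F_y)$. The star property then gives $\alpha(\ve) \le \alpha(\vf^*)^* = \alpha(\vf)$, the final equality holding because $\alpha$ commutes with the involutions.

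The inductive step is a routine factorisation: when $y \ne u$, take the neighbour $w$ of $y$ on the $y$--$u$ path in~$T$ and set $\vec g := (y,w)$. Then $\ve < \vec g$ (a base-case step at~$y$) and $\vec g < \vf$ (via a strictly shorter path in~$T$), so the induction hypothesis yields $\alpha(\ve) \le \alpha(\vec g) \le \alpha(\vf)$. The main obstacle is really packed into the base case: without irredundancy, the star property of $\alpha(\vec F_y)$ becomes vacuous whenever $\alpha(\ve)$ and $\alpha(\vf^*)$ happen to coincide, and indeed the $(T_3, \alpha_3)$ example constructed above witnesses exactly this failure mode when the $\vri$ are not pairwise nested. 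Irredundancy is precisely the hypothesis that rules out this pathology.
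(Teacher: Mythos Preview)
Your proof is correct and follows essentially the same approach as the paper's: the paper observes that irredundancy turns the star condition on the \emph{sets} $\alpha(\vec F_t)$ into the multistar condition on the \emph{families} $(\alpha(\ve)\mid\ve\in\vec F_t)$, which makes $\alpha$ order-respecting locally at each node, and then appeals to transitivity of~$\le$ to propagate this globally---which is exactly your base case plus inductive step, spelled out. Your explicit identification of where irredundancy enters (to ensure $\alpha(\ve)\ne\alpha(\vf^*)$ so that the star inequality between \emph{distinct} elements applies) matches the paper's remark that the $(T_3,\alpha_3)$ example is essentially the only obstruction.
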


\begin{proof}
As $(T,\alpha)$ is irredundant, our assumption that the sets $\alpha(\vec F_t)$ are stars is tantamount to saying that the families $(\alpha(\ve)\mid \ve\in\vec F_t)$ are multistars. In other words, the $S$-trees which $(T,\alpha)$ induces on its maximal stars, the subtrees consisting of a fixed node~$t$ and the neighbours of~$t$, are order-respecting. As the relation~$\le$ is transitive, this propagates through~$\vec E(T)$ to make the entire $(T,\alpha)$ order-respecting.
\end{proof}

Note that Lemma~\ref{preservele} does not have a direct converse: order-respecting $S$-trees over stars can be redundant. Indeed, if the separation $\vs$ in our earlier example of~$(T_3,\alpha_3)$ is small, i.e., satisfies $\vs\le\sv$, then $(T_3,\alpha_3)$ is an order-respecting but redundant $S$-tree over stars.%
 \COMMENT{}%
   \COMMENT{}
   See~\cite[Lemma 6.3]{TreeSets} for more details on how the orderings on $\vec E(T)$ and its $\alpha$-image in~$\vS$ are related or not.

Two edges of an irredundant $S$-tree over stars cannot have orientations that point towards each other and map to the same separation, unless this is trivial:

\begin{LEM}\label{NewLemma}{\rm \cite{TreeSets}}\COMMENT{}
Let $(T,\alpha)$ be an irredundant%
   \COMMENT{}
   $S$-tree over a set $\F$ of stars. Let $e,f$ be distinct edges of~$T$ with orientations $\ve < \vf$ such that $\alpha(\ve) = \alpha(\fv) =:\vr$. Then $\vr$ is trivial.

In particular, $T$~cannot have distinct leaves associated with the same star~$\{\rv\}$ unless $\vr$ is trivial.%
   \COMMENT{}
\end{LEM}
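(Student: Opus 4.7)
The plan is to trace the $\alpha$-values along the unique path in $T$ joining $e$ and $f$ and isolate an intermediate edge whose $\alpha$-image lies strictly between $\vr$ and $\rv$. Writing $\ve=(x,y)$ and $\vf=(u,v)$, the hypothesis $\ve<\vf$ says that the $e$--$f$ path in $T$ joins $y$ to $u$. The degenerate case $y=u$ is dispatched immediately: $\ve$ and $\fv$ are then two elements of $\vec F_y$, distinct because $e\ne f$ forces $x\ne v$, both mapped by $\alpha$ to $\vr$, contradicting irredundancy.

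So $y\ne u$. Let $g_1,\dots,g_\ell$ ($\ell\ge 1$) be the edges of the $y$--$u$ path, each oriented as $\vec g_i$ from $y$ towards $u$, and set $s_i:=\alpha(\vec g_i)$. The chain $\ve<\vec g_1<\dots<\vec g_\ell<\vf$ in the natural ordering on $\vec E(T)$, together with Lemma~\ref{preservele}, gives
\[\vr=\alpha(\ve)\le s_1\le\dots\le s_\ell\le\alpha(\vf)=\rv.\]
Irredundancy at $y$, applied to the distinct elements $\ve,\vec g_1^{\,*}\in\vec F_y$, yields $\vr\ne s_1^*$ and hence $s_1<\rv$; symmetrically, irredundancy at $u$ gives $\vr<s_\ell$.

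It now suffices to produce an index $i$ with $\vr<s_i<\rv$, since $s:=g_i$ then witnesses the triviality of $\vr$. When $\ell=1$ this already holds. For $\ell\ge 2$ I would argue by contradiction: if every $s_i$ belonged to $\{\vr,\rv\}$, the endpoint strictness and monotonicity of the chain would force some $j$ with $1\le j<\ell$, $s_j=\vr$ and $s_{j+1}=\rv$; but at the internal node $z_j$ of the $y$--$u$ path between $g_j$ and $g_{j+1}$, the star $\vec F_{z_j}$ contains the distinct elements $\vec g_j,\vec g_{j+1}^{\,*}$, so irredundancy demands $s_j\ne s_{j+1}^*$, yet $s_{j+1}^*=\vr=s_j$. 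This ``no-flip'' step is the real content of the argument; everything else is routine bookkeeping. The ``in particular'' clause then follows by applying the main claim with $\ve:=(\ell_1,x_1)$ and $\vf:=(x_2,\ell_2)$, where $e_j=\{\ell_j,x_j\}$ is the unique edge at the leaf $\ell_j$: $\alpha(\ve)=\vr=\alpha(\fv)$ by hypothesis, and $\ve<\vf$ because the $e_1$--$e_2$ path in $T$ must join $x_1$ to $x_2$.
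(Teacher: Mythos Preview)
Your argument is correct. The paper does not supply its own proof of this lemma (it is cited from~\cite{TreeSets}), so there is nothing to compare against directly; your approach---tracing the $\alpha$-chain along the $e$--$f$ path via Lemma~\ref{preservele} and using irredundancy at each node to forbid a flip from $\vr$ to~$\rv$, thereby forcing an intermediate $s_i$ strictly between them---is the natural one and goes through cleanly.

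Two minor remarks. First, when you write ``$s:=g_i$ witnesses the triviality of~$\vr$'' you of course mean the separation underlying $s_i=\alpha(\vec g_i)$, not the tree edge~$g_i$; note that $\vr<s_i<\rv$ gives $\vr<s_i^*$ by~\eqref{invcomp}, and $s_i\notin\{\vr,\rv\}$ ensures the witness differs from~$r$. Second, in the ``in particular'' clause you implicitly use that the leaf edges $e_1,e_2$ are distinct. This fails only when $T=K_2$, but then both leaves being associated with~$\{\rv\}$ would force $\alpha(\ell_1,\ell_2)=\rv=\alpha(\ell_2,\ell_1)=\vr$, making $r$ degenerate and contradicting that $\{\rv\}$ is a star.
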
%
   \COMMENT{}

Note that the irredundancy assumption in Lemma~\ref{NewLemma} cannot be replaced by the weaker%
   \COMMENT{}
   assumption that $(T,\alpha)$ be order-respecting: if $T$ is a 3-star whose edges oriented towards the centre map to the same small but nontrivial separation$\,\vs$, then $(T,\alpha)$ is order-respecting but its three leaves are associated with the same star~$\{\sv\}$ with $\vs$ nontrivial.

Redundant $S$-trees can clearly be {\em pruned\/} to irredundant ones over the same~$\F$, by deleting entire branches that hang off edges causing a redundancy:

\begin{LEM}\label{prune}{\rm \cite{TreeSets}}\COMMENT{}
If $(T,\alpha)$ is an $S$-tree over~$\F$, possibly redundant, then $T$ has a subtree~$T'$ such that $(T',\alpha')$ is an irredundant $S$-tree over~$\F$, where $\alpha'$ is the restriction of $\alpha$ to $\vec E(T')$. If $(T,\alpha)$ is rooted at a leaf~$x$ and $T$ has an edge, then $T'$ can be chosen so as to contain $x$ and~$e_x$.\qed
\end{LEM}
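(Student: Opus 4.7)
The plan is to eliminate redundancies one at a time by deleting an entire branch hanging off a duplicated edge. Suppose $(T,\alpha)$ has a redundancy at a node~$t$, witnessed by distinct neighbours $t',t''$ with $\alpha(t',t)=\alpha(t'',t)=:\vs$. I would let $T^-$ be the component of $T-tt''$ containing~$t$, set $\alpha^-:=\alpha|_{\vec E(T^-)}$, and first verify that $(T^-,\alpha^-)$ is again an $S$-tree over~$\F$. Every surviving node other than~$t$ keeps all of its incident edges, so its associated star does not change. At~$t$, the family $(\alpha(\ve)\mid\ve\in\vec F_t)$ loses exactly one copy of~$\vs$; but since $\alpha(\vec F_t)$ is a star~-- not a multistar~-- this set is unaltered, so $(T^-,\alpha^-)$ remains over~$\F$.

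I would then iterate. Each pruning strictly decreases $|V(T)|$, so in the finite case the procedure terminates with an irredundant $(T',\alpha')$ over~$\F$ and $T'\sub T$. (For infinite~$T$ the same conclusion follows by applying Zorn's lemma to the poset of subtrees of~$T$ which, with the corresponding restriction of~$\alpha$, form $S$-trees over~$\F$, ordered by reverse inclusion.)

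For the rooted case I would exploit the symmetry of the witness: at each redundancy one may equally well prune the $t'$-component of $T-tt'$ in place of the $t''$-component of $T-tt''$. These two candidate subtrees~$U$ are disjoint (neither meets~$t$), so the leaf~$x$ lies in at most one; I would always prune the one not containing~$x$, which keeps $x$ in the tree at every step. The edge~$e_x$ survives too: any edge removed in a pruning step lies either inside the pruned subtree~$U$ (and hence has both endpoints in~$U$), or is the cut edge $tt''$ joining $t\in T^-$ to $t''\in U$; so if $e_x$ were removed, its endpoint~$x$ would have to lie in~$U$ (using that $t$ has degree~$\ge 2$, so $t\ne x$), contradicting our choice.

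The main, if minor, obstacle lies in the first verification, that pruning preserves the ``over~$\F$'' property. Once one notices that $\alpha(\vec F_t)$ is a set of separations rather than a multiset, the deletion of the duplicate edge at~$t$ is absorbed for free, and the rest of the argument is elementary bookkeeping about which endpoints end up in which component after an edge is removed from the tree.
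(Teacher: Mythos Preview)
Your argument is correct and matches the paper's approach exactly: the paper states this lemma with a \qed\ and merely remarks beforehand that redundant $S$-trees ``can clearly be {\em pruned\/} to irredundant ones over the same~$\F$, by deleting entire branches that hang off edges causing a redundancy.'' Your write-up spells this out carefully, including the key observation that $\alpha(\vec F_t)$ is a \emph{set} rather than a multiset (so deleting one of two edges with the same $\alpha$-image leaves the associated star unchanged), and handles the rooted clause by always pruning the branch not containing~$x$.
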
%
   \COMMENT{}

Recall that stars of separations need not, by definition, be antisymmetric. While it is important for our proofs to allow this,%
   \COMMENT{}
   we can always contract an $S$-tree $(T,\alpha)$ over a set $\F$ of stars to an $S$-tree $(T',\alpha')$ over the subset $\F'\sub\F$ of its antisymmetric stars. Indeed, if $T$ has a node~$t$ such that $\alpha(\vec F_t)$ is not antisymmetric, then $t$ has neighbours $t',t''$ such that $\alpha(t',t) = \vs = \alpha(t,t'')$ for some $\vs\in \vS$. Let $T'$ be the tree obtained from~$T$ by deleting the component of $T-t't - tt''$ containing~$t$ and joining $t'$ to~$t''$. Let $\alpha'(t',t'') := \vs$ and $\alpha'(t'',t') := \sv$, and otherwise let $\alpha':= \alpha\!\restriction\! \vec E(T')$. Then $(T',\alpha')$ is again an $S$-tree over~$\F$. Since we can do this whenever some $\vec F_t$ maps to a star of separations that is not antisymmetric, but only finitely often, we must arrive at an $S$-tree over~$\F'$.

An $S$-tree $(T,\alpha)$ is called {\em tight\/} if all the sets $\alpha(\vec F_t)$ for nodes $t\in T$ are antisymmetric. The reduction described above thus turns an arbitrary $S$-tree over~$\F$ into a tight $S$-tree over~$\F'$.

\begin{LEM}\label{onlyedge}{\rm \cite{TreeSets}}%
   \COMMENT{}
   Let $(T,\alpha)$ be an $S$-tree over a set $\F$ of stars, rooted at a leaf~$x$. Assume that $T$ has an edge, and that $\vr = \alpha(\vex)$ is nontrivial.%
   \COMMENT{}
   Then $T$ has a minor $T'\!$ containing~$x$ and~$e_x$ such that $(T',\alpha')$, where $\alpha' = \alpha\!\restriction\!\vec E(T')$, is a tight and irredundant $S$-tree over~$\F$.

For every such $(T',\alpha')$ the edge $\vex$ is the only edge $\ve\in\vec E(T')$ with $\alpha(\ve)=\vr$.%
   \COMMENT{}
\end{LEM}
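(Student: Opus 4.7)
The plan is to handle the two assertions separately. For existence of $T'$, iteratively reduce $(T,\alpha)$: first apply Lemma~\ref{prune} (which preserves $x$ and $e_x$) to make the $S$-tree irredundant, then whenever some $\alpha(\vec F_t)$ is not antisymmetric apply the contraction procedure described just before the lemma (which again yields an $S$-tree over~$\F$), and alternate. Each step strictly decreases $|E(T)|$, so the process terminates in a tight, irredundant $S$-tree over~$\F$. When the contraction is forced at the neighbour $y$ of~$x$, it can be performed with $x$ itself as one of the two distinguished neighbours $t',t''$, leaving $x$ a leaf of the resulting minor whose single edge inherits the $\alpha$-image~$\vr$ and thus represents $e_x$ under the minor correspondence.

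For the uniqueness of $\vex$ as an $\alpha$-preimage of $\vr$, suppose some $\ve\in\vec E(T')$ with $\ve\neq\vex$ has $\alpha(\ve)=\vr$. First, $\ve\neq\evx$: otherwise $\vr=\alpha(\evx)=\rv$, making $\vr$ degenerate; but stars in~$\F$ consist of nondegenerate separations only, so $\vr\in\alpha(\vec F_y)$ is nondegenerate. Hence the underlying edge of $\ve$ is distinct from~$e_x$, and since $x$ is a leaf, one orientation of that edge is comparable with $\vex$ in the natural ordering on~$\vec E(T')$.

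If $\vex<\ve$, Lemma~\ref{preservele} makes $(T',\alpha')$ order-respecting. Along the directed chain $\vex=\ve_1<\ve_2<\cdots<\ve_n=\ve$ of oriented edges from $x$ to~$e$ (with $n\ge 2$, since $e\neq e_x$), order-respectingness sandwiches each $\alpha(\ve_i)$ between $\vr$ and~$\vr$, so all equal~$\vr$. At the shared node $y$ between $e_1$ and $e_2$, the star $\vec F_y$ contains both $\ve_1=\vex$ (with image~$\vr$) and $\ev_2$ (with image~$\rv$); tightness forces $\alpha(\vec F_y)$ to be antisymmetric, hence $\vr=\rv$, contradicting nondegeneracy. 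If instead $\ve<\evx$, then $\vex<\ev$ in the natural ordering and $\alpha(\ev)=\rv$, so $\alpha(\vex)=\vr=\alpha((\ev)^*)$; applying Lemma~\ref{NewLemma} to the distinct edges $e_x,e$ with orientations $\vex<\ev$ yields that $\vr$ is trivial, contradicting the hypothesis.

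The hard part is the existence assertion. The contraction procedure as literally stated deletes the component of the chosen node and so can remove~$y$ and with it the literal edge~$e_x$; making the reduction go through requires either interpreting ``contains $e_x$'' through the minor correspondence, so that the replacement edge at~$x$ (which retains the image~$\vr$) still represents~$e_x$, or using the nontriviality of~$\vr$ to show that the problematic contractions near~$y$ never actually need to be performed. The uniqueness assertion is then the clean payoff, combining the order-respectingness provided by Lemma~\ref{preservele}, the antisymmetry given by tightness, and Lemma~\ref{NewLemma}.
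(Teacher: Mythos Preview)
Your uniqueness argument is correct and essentially identical to the paper's: the paper also rules out $\vex<\ev$ via Lemma~\ref{NewLemma}, then in the remaining case $\vex<\ve$ sandwiches all intermediate edges to image~$\vr$ and obtains a tightness violation.

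The existence argument, however, has a genuine gap. You only address contractions at the neighbour $y$ of~$x$, and even there you assert without proof that $x$ can be taken as one of~$t',t''$. But a contraction at a node $t\neq y$ can delete $x$ outright: if $x$ lies in the component of $T-t't-tt''$ containing~$t$, then $x$ disappears entirely, not merely the literal edge~$e_x$, and your ``minor correspondence'' interpretation no longer applies since there is no replacement edge at~$x$. The paper closes this gap with one clean observation: \emph{every} edge deleted in a contraction step maps to a trivial separation. Indeed, the edges at~$t$ other than $t't$ and~$tt''$ lie in the star $\alpha(\vec F_t)$ together with both $\vs$ and~$\sv$; by irredundancy their images are distinct from~$\vs,\sv$ and hence trivial, witnessed by~$s$. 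Any edge deeper in the deleted component has an orientation $\vedash<\ve$ below such an edge, so by order-respectingness (Lemma~\ref{preservele}) its image is also trivial, since triviality is down-closed. As $\vr$ is nontrivial, $e_x$ is therefore never deleted. This single argument simultaneously handles the case $t=y$ (forcing $x\in\{t',t''\}$, which you asserted) and the case $t\neq y$ (which you did not address).

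A minor point: contraction preserves irredundancy, since the stars at $t'$ and~$t''$ are unchanged as multisets of $\alpha$-values and all other surviving nodes keep their neighbours. So your alternation with Lemma~\ref{prune} is unnecessary; one initial pruning suffices, and order-respectingness is maintained throughout.
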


\begin{proof}
By Lemma~\ref{prune} we may assume that $(T,\alpha)$ is irredundant and contains both~$x$ and~$e_x$. We now apply to $(T,\alpha)$ the reduction described before this lemma to obtain a tight and irredundant $S$-tree $(T',\alpha')$ over~$\F$.

Let us show that $T'$ still contains $x$ and~$e_x$. When, in the said reduction process, we formally deleted an edge~$e$ at a node~$t$%
   \COMMENT{}
   (with $\ve$ oriented towards~$t$, say), then $\alpha(\ve)$ was trivial, witnessed by~$s$. (As $(T,\alpha)$ is irredundant, we have $\alpha(\ve)\notin\{\vs,\sv\}$.) But every other edge $e'$ deleted at that step had an orientation $\vedash<\ve$ for such an edge~$e$ at~$t$, making $\alpha(\vedash)$ trivial too. As $(T,\alpha)$ was order-respecting (Lemma~\ref{preservele}), we thus never deleted~$e_x$,  because $\vr = \alpha(\vex)$ was nontrivial by assumption.

It remains to show that $\vex$ is the only edge $\ve\in\vec E(T')$ with $\alpha(\ve)=\vr$. If there is another such edge~$\ve$, then $e\ne e_x$, since otherwise $\ve=\evx$ and hence $\alpha(\ve) = \rv$ as well as $\alpha(\ve) = \vr$, which would make $r$ degenerate, contradicting the fact that $(T,\alpha)$ is an $S$-tree over stars.

By Lemma~\ref{NewLemma} we cannot have $\vex < \ev$,%
   \COMMENT{}
   so $\vex < \ve$ since $\vex$ issues from a leaf. By Lemma~\ref{preservele}, every edge $\vedash$ with $\vex\le\vedash\le\ve$ satisfies $\vr = \alpha(\vex)\le\alpha(\vedash)\le\alpha(\ve) = \vr$, so $\alpha(\vedash) = \vr$. Our assumption of $\ve\ne\vex$ thus implies that $(T',\alpha')$ is not tight, a contradiction.
  \end{proof}

\section{Weak duality}\label{sec:weak}

Our paradigm in this paper is to capture the notion of `highly cohesive substructures' in a given  combinatorial structure by orientations $O$ of a set $S$ of separations of this structure that satisfy certain consistency rules laid down by specifying a set $\F$ of `forbidden' sets of oriented separations that $O$ must not contain.\looseness=-1

Let us say that a partial orientation $P$ of $S$ \emph{avoids} $\F\sub 2^{\vS}$ if $2^P\cap\F = \emptyset$.

\begin{THM}[Weak Duality Theorem]\label{thm:weak}
  Let $(\vS,\le\,,\!{}^*)$ be a finite separation system and $\F\sub 2^\vS\!$ a set of stars.%
   \COMMENT{}
   Then exactly one of the following assertions holds:
\begin{enumerate}[\rm(i)]\itemsep0pt
  \item There exists an ${S}$-tree over~$\F$.%
    \COMMENT{}
  \item There exists an orientation of~${S}$ that avoids~$\F$.%
      \COMMENT{}
  \end{enumerate}
\end{THM}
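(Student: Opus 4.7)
The plan is first to dispose of the easy direction (mutual exclusivity of~(i) and~(ii)) and then to establish the main direction by induction on~$|S|$. For the easy direction, suppose we had both an $S$-tree $(T,\alpha)$ over~$\F$ and an orientation~$O$ of~$S$ avoiding~$\F$. I~would orient each edge $e\in E(T)$ by selecting the orientation $\ve$ with $\alpha(\ve)\in O$; since $T$ is a finite tree this edge-orientation has a sink~$t$, all of whose incident edges point into~$t$, so $\vec F_t$ consists precisely of these oriented edges. Then $\alpha(\vec F_t)\sub O$ while $\alpha(\vec F_t)\in\F$, contradicting that $O$ avoids~$\F$.

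For the main direction I~induct on~$|S|$. The base case $|S|=0$ is immediate: the only $S$-tree is a single vertex with star~$\emptyset$ and the only orientation is~$\emptyset$, so~(i) holds iff $\emptyset\in\F$ and~(ii) holds iff $\emptyset\notin\F$. For the inductive step, pick a nondegenerate $s\in S$ (handling degenerate $\vs=\sv$ separately), set $\vS':=\vS\sm\{\vs,\sv\}$ and $S':=S\sm\{s\}$, and define
\[
\F^+:=\{\sigma\sm\{\vs\}:\sigma\in\F,\ \sv\notin\sigma\},\quad \F^-:=\{\sigma\sm\{\sv\}:\sigma\in\F,\ \vs\notin\sigma\},
\]
both sets of stars in~$\vS'$. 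The key observation is that an orientation~$O'$ of~$S'$ avoids~$\F^+$ precisely when $O'\cup\{\vs\}$ avoids~$\F$, and symmetrically for~$\F^-$. Applying the inductive hypothesis to the pairs $(\vS',\F^+)$ and $(\vS',\F^-)$, if either yields an avoiding orientation instead of an $S'$-tree, then extending by $\vs$ or $\sv$ respectively gives an orientation of~$S$ avoiding~$\F$ and we are done.

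The only remaining case is that both an $S'$-tree $(T^+,\alpha^+)$ over~$\F^+$ and an $S'$-tree $(T^-,\alpha^-)$ over~$\F^-$ exist, and must be combined into an $S$-tree over~$\F$. Call a node $t\in T^+$ \emph{good} if $\alpha^+(\vec F_t)\in\F$ already, and \emph{needy} otherwise; in the needy case $\alpha^+(\vec F_t)\cup\{\vs\}\in\F$ by definition of~$\F^+$, and analogous notions apply to~$T^-$ using~$\sv$. If either tree has no needy node, it is already an $S$-tree over~$\F$ and we are done; otherwise the basic move is to pick a needy $t^+\in T^+$ and a needy $u^-\in T^-$, take the disjoint union of the two trees together with a new edge $t^+u^-$, and set $\alpha(u^-,t^+):=\vs$ (so $\alpha(t^+,u^-)=\sv$). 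The stars at~$t^+$ and~$u^-$ in the combined tree then become $\alpha^+(\vec F_{t^+})\cup\{\vs\}\in\F$ and $\alpha^-(\vec F_{u^-})\cup\{\sv\}\in\F$, as desired.

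The main obstacle is that this one-shot gluing leaves the \emph{other} needy nodes of~$T^+$ and~$T^-$ with stars still outside~$\F$. I~plan to resolve this by iterating the construction, attaching a fresh rooted copy of~$T^-$ at \emph{every} needy node of~$T^+$ and then a fresh rooted copy of~$T^+$ at every needy node inside each attached~$T^-$, and so on. The delicate point will be termination; I~expect to secure it either by choosing $T^+$ and~$T^-$ in advance to minimise a suitable measure (say the number of needy nodes, invoking the induction hypothesis to justify such an extremal choice) and showing that the measure strictly decreases, or by interleaving the attachment with a secondary induction on total tree size so that each new copy is genuinely smaller than its predecessor. Once termination is in hand, checking that every star in the final tree lies in~$\F$ is routine from the classification of good and needy nodes.
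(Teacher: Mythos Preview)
Your mutual-exclusivity argument is correct and is exactly the paper's Lemma~\ref{lem:notboth}. The gap is in the gluing step, and it is a genuine one.

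Suppose $T^+$ has $a\ge 2$ needy nodes and $T^-$ has $b\ge 2$ needy nodes. Attaching a fresh copy of $T^-$ at each needy node of $T^+$ cures those $a$ nodes, but each attached copy retains $b-1$ needy nodes (only its attachment point has been cured), leaving $a(b-1)$ needy nodes overall. The next round of attaching $T^+$-copies leaves $a(b-1)(a-1)$, then $a(b-1)(a-1)(b-1)$, and so on; with $a,b\ge 2$ this sequence never reaches zero, so the process does not terminate and you never obtain a finite $S$-tree. Neither of your proposed fixes rescues this: the induction hypothesis only asserts the existence of \emph{some} $S'$-tree over $\F^\pm$, with no control on the number of needy nodes, so minimising does not help; and the freshly attached copies are the same trees $T^\pm$ again, not smaller ones, so a secondary size induction has nothing to bite on.

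The paper sidesteps this by a different induction. Rather than deleting a separation from $S$, it keeps $S$ fixed and enlarges~$\F$: for a chosen nondegenerate $s_0$ it sets $\F_1:=\F\cup\{\{\vso\}\}$ and $\F_2:=\F\cup\{\{\svo\}\}$, and inducts on the number of separations in $S\sm D$ with neither orientation forced by~$\F$. The ``needy'' spots in the resulting $S$-trees are then \emph{leaves} associated with the specific singleton $\{\vso\}$ or $\{\svo\}$. After pruning to irredundant trees (Lemma~\ref{prune}), Lemma~\ref{NewLemma} guarantees that such a leaf is unique whenever the corresponding orientation of $s_0$ is nontrivial; and since $\vso,\svo$ cannot both be trivial, at least one side always has a unique such leaf, so one can glue copies of that tree at all the relevant leaves of the other in a single, terminating, step. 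In your setup the needy nodes can be internal and can proliferate, and no analogue of Lemma~\ref{NewLemma} is available to bound their number.
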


\noindent
We remark that, by Lemma~\ref{prune}, the $S$-tree in~(i) can be chosen irredundant, in which case it will be order-respecting by Lemma~\ref{preservele}.

For our proof of Theorem~\ref{thm:weak} we need the following simple lemma, whose proof uses the fact that every orientation of a finite tree has a sink. To find one, just follow a maximal directed path.

\begin{LEM}\label{lem:notboth}
  Let $(\vS,\le\,,\!{}^*)$ be a separation system and $\F\sub 2^\vS$.
  If there exists an ${S}$-tree over~$\F$, then no orientation of~${S}$ avoids~$\F$.
\end{LEM}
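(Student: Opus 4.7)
The plan is to argue by contradiction, exploiting the hint exactly as stated. Suppose an $S$-tree $(T,\alpha)$ over~$\F$ exists, and at the same time some orientation~$O$ of~$S$ avoids~$\F$; I would derive a contradiction.

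First, I would use~$O$ to orient the edges of the underlying tree~$T$. For each edge $e$ of~$T$ with orientations $\ve,\ev$, the images $\alpha(\ve)$ and $\alpha(\ev) = \alpha(\ve)^*$ are inverses of each other in~$\vS$, so exactly one of them lies in~$O$ (if both do, which can only happen when $\alpha(\ve)$ is degenerate, pick either). Orient~$e$ in the direction whose image under~$\alpha$ lies in~$O$. This produces an orientation of every edge of the finite tree~$T$.

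Next, I would apply the remark in the lemma's hint: this orientation of~$T$ has a sink~$t$, that is, a node all of whose incident edges are directed towards it. To see this, follow any maximal directed path in the oriented tree; at its final vertex, no incident edge can point outward, for such an edge would have to lead to a new vertex (since $T$ has no cycle) and so would extend the path. At such a sink~$t$, the edges of~$T$ incident with~$t$, oriented towards~$t$, are precisely the elements of~$\vec F_t$, and by construction $\alpha(\vec F_t)\sub O$.

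Since $(T,\alpha)$ is an $S$-tree over~$\F$, the set $\alpha(\vec F_t)$ lies in~$\F$, so $\alpha(\vec F_t)\in 2^O\cap\F$. This contradicts the assumption that $O$ avoids~$\F$, i.e.\ that $2^O\cap\F = \emptyset$. There is no real obstacle here; the only minor point to handle carefully is the degenerate case in step one, where both orientations of an edge have the same $\alpha$-image, but any choice works because $O$ still contains that common image.
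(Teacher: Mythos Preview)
Your proof is correct and follows essentially the same approach as the paper's: use $O$ to orient the edges of~$T$ via~$\alpha$, locate a sink~$t$, and observe that $\alpha(\vec F_t)\subseteq O$ lies in~$\F$. The paper phrases this as a direct argument rather than by contradiction and does not pause over the degenerate case, but these are purely stylistic differences.
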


\begin{proof}
  Let $(T,\alpha)$ be an ${S}$-tree over $\F$, and let $O$ be an orientation of~${S}$. Let $t\in V(T)$ be a sink in the orientation of the edges of~$T$ that $O$ induces via~$\alpha$.%
   \COMMENT{}
   Then $\alpha(\vec F_t) \sub O$. Since $\alpha(\vec F_t)\in\F$, as $(T,\alpha)$ is an $S$-tree over~$\F$, this means that $O$ does not avoid~$\F$.
\end{proof}

Before we launch into the proof of Theorem~\ref{thm:weak}, let us sketch its idea. Our aim will be to find either an $\F$-avoiding orientation~$O$ of~$S$ or construct an $S$-tree over~$\F$ to witness that no such orientation exists (by Lemma~\ref{lem:notboth}). If $\F$ contains any singleton sets~$\{\sv\}$, then every $\F$-avoiding orientation of~$S$ must contain~$\vs$ rather than~$\sv$. We think of these $\vs$ as `forced' by~$\F$, and will apply induction on the number of separations in~$S$ neither of whose orientations is forced.

In the induction step, we shall consider some such~$s$, call it~$s_0$, and see what happens if we force one of its orientations by adding either $\{\svo\}$ or~$\{\vso\}$ to~$\F$. Then the induction will give us either an orientation of~$S$ that avoids one of these augmented~$\F$, and hence also the original~$\F$, or two $S$-trees, one over each augmented~$\F$. If one of these is an $S$-tree even over the original~$\F$, we are again done, so we assume not.

Then one of these $S$-trees contains a leaf associated with~$\{\svo\}$, while the other contains a leaf associated with~$\{\vso\}$. Assume, for simplicity, that these are the only such leaves in their respective $S$-trees. We can then combine these two trees into a single $S$-tree over our original~$\F$ by identifying those two leaves and then suppressing the identified node, completing the proof.

It will help, also with the more difficult proof of our strong duality theorem in Section~\ref{sec:strong}, to visualize the outline above once more for the case when $S$ consists of separations of a graph~$G$. Then $s_0$ will be a separation~$\{A,B\}$ of~$G$, and the two $S$-trees we obtain from the induction hypothesis will essentially be $S$-trees over~$\F$ of the two sides of this separation, of the graphs~$G[A]$ and~$G[B]$. Only `essentially', because they will each have one additional leaf, associated with $(A,B)$ or~$(B,A)$, respectively. In the \td s naturally associated with these $S$-trees, these leaf nodes will correspond to the bag~$B$ or the bag~$A$, respectively. The rest of these $S$-trees will decompose the other side of $s_0 = \{A,B\}$, the graph $G[A]$ or~$G[B]$.

\begin{proof}[Proof of Theorem~\ref{thm:weak}]
  By Lemma~\ref{lem:notboth}, at most one of (i) and (ii) holds.
  We now show that at least one of them holds. Let
 $$O^- := \{\,\vs\mid \{\sv\}\in\F\,\}.$$
    Then any $\F$-avoiding orientation of~$S$ must include~$O^-$ as a subset. As $\F$ consists of stars, $O^-$~contains no degenerate separations.

If $O^-\supe \{\vs,\sv\}$ for some~$s\in S$, then $(T,\alpha)$ with $T=K_2$ and ${\rm im}\,\alpha=\{\vs,\sv\}$ is an $S$-tree over~$\F$. So we may assume that $O^-$ is antisymmetric: a~partial orientation of~$S\sm D$, where $D$ is the set of degenerate elements of~$S$. We apply induction on $|S\sm D|-|O^-|$ to show that, whenever $\F$ is such that $O^-$ is antisymmetric, either (i) or (ii) holds.

  If $|S\sm D| = |O^-|$, then ${O^-\cup\vec D}$ is an orientation of all of~${S}$. If (ii) fails then ${O^-\cup\vec D}$~has a subset $\sigma\in\F$. As $\F$ consists of stars we have $\sigma\cap\vec D = \emptyset$, so $\sigma\sub O^-$. By definition of~$O^-$, and since $O^-$ is antisymmetric, $\sigma$~is not a singleton set%
   \COMMENT{}
   (though it may be empty).%
   \COMMENT{}
   Let $T$ be a star of $|\sigma|$ edges with centre~$t$, say, and let $\alpha$ map its oriented edges $(x,t)$ bijectively to~$\sigma$. Then $(T,\alpha)$ satisfies~(i).%
   \COMMENT{}

We may thus assume that ${S\sm D}$ contains a separation $s_0$ such that neither $\vso$ nor~$\svo{}$ is in ${O^-}$. Let
 $$\F_1 := \F\cup\{\{\vso\}\}\quad\text{and}\quad\F_2 := \F\cup\{\{\svo{}\}\},$$
 and put $O^-_i  := \{\,\vs\mid \{\sv\}\in\F_i\,\}$ for $i=1,2$. Note that $|O^-_i| > |O^-|$, and $O^-_i$ is again antisymmetric.%
  \COMMENT{}

Since any $\F_i$-avoiding orientation of~$S$ also avoids~$\F$, we may assume for both $i=1,2$ that no orientation of~$S$ avoids~$\F_i$. By the induction hypothesis, there are ${S}$-trees $(T_i,\alpha_i)$ over~$\F_i$. Unless one of these is in fact over~$\F$,%
   \COMMENT{}
   the tree $T_1$~has a leaf~$x_1$ associated with~$\{\vso\}$, while $T_2$ has a leaf $x_2$ associated with~$\{\svo\}$. Use Lemma~\ref{prune} to prune the $(T_i,\alpha_i)$ to irredundant $S$-trees $(T'_i,\alpha'_i)$ over~$\F_i$ containing $x_i$ and~$e_{x_i}$. Suppose first that $s_0$ has no trivial orientation. Then Lemma~\ref{NewLemma} implies that $x_1$ and $x_2$ are the only leaves of $T'_1$ and~$T'_2$ associated with $\{\vso\}$ and~$\{\svo\}$, respectively.

  Let $T$ be the tree obtained from the disjoint union of~$T'_1-x_1$ and $T'_2-x_2$ by joining  the neighbour $y_1$ of $x_1$ in~$T'_1$ to the neighbour~$y_2$ of~$x_2$ in~$T'_2$. Let $\alpha\colon\vec E(T)\to \vS$ map $(y_1,y_2)$ to~$\vso$ and $(y_2,y_1)$ to~$\svo{}$ and otherwise extend $\alpha'_1$ and~$\alpha'_2$. Then $\alpha'_1(y_1,x_1) = \alpha(y_1,y_2) = \alpha'_2(x_2,y_2)$, so $\alpha$ maps the oriented stars of edges at $y_1$ and~$y_2$ to the same multistars of separations in~$\vS$ as~$\alpha'_1$ and~$\alpha'_2$ did. The stars they induce lie in~$\F$, so $(T,\alpha)$ is an ${S}$-tree over~$\F$.

Suppose now that $\vso$, say, is trivial. Then $\svo$ is nontrivial, and $x_1$ is the only leaf of~$T'_1$ associated with~$\{\vso\}$, by Lemma~\ref{NewLemma} as before. Let $x_2^1,\dots,x_2^n$ be the leaves of~$T'_2$ associated with~$\{\svo\}$,%
   \COMMENT{}
   and let $T$ be obtained from the union of $T'_2 - \{x_2^1,\dots,x_2^n\}$ with $n$ copies of~$T'_1 - x_1$ by joining, for all $i=1,\dots,n$, the neighbour $y_1^i$ of~$x_1$ in the $i$th copy of~$T'_1 - x_1$ to the neighbour $y_2^i$ of~$x_2^i$ in~$T'_2$. Define $\alpha\colon \vec E (T)\to\vS$ as earlier, mapping $(y_1^i,y_2^i)$ to~$\vso$ and $(y_2^i,y_1^i)$ to~$\svo$ for all~$i$, and otherwise extending $\alpha'_1$ and~$\alpha'_2$.
   \end{proof}

\section{Strong duality}\label{sec:strong}

Theorem~\ref{thm:weak}, alas, has a serious shortcoming: there are few, if any, sets $S$ and $\F\sub 2^\vS$ such that $\F$ consists of stars in~$\vS$ and the $\F$-avoiding orientations of~$S$ (all of them) capture an interesting notion of highly cohesive substructure found in the wild. The reason for this is that we are not, so far, requiring these orientations $O$ to be consistent: we allow that $O$ contains separations $\rv$ and $\vs$ when $\vr < \vs$, which will not usually be the case when $O$ is induced by a meaningful highly cohesive substructure in the way discussed earlier. (We cannot simply add such sets $\{\rv,\vs\}$ to~$\F$, since in order to be able to use Lemma~\ref{NewLemma} we must assume that $\F$ consists of stars of separations.)

So what happens if we strengthen (ii) so as to ask for a consistent orientation of~$S$? Let us call a consistent $\F$-avoiding orientation of~$S$ an {\em $\F$-tangle\/}. Since all consistent orientations of~$S$ will contain all trivial $\vr\in\vS$, we may then add all co-trivial singletons~$\{\rv\}$ to~$\F$ without impeding the existence of an $\F$-tangle; this might help us find an $S$-tree over~$\F$ if no such orientation exists.

Still, our proof breaks down as early as the induction start: we now also have to ask that $O^-$~-- indeed, $O^-\cup \vec D$~-- should be consistent. It is not even unnatural to ensure that $O^-$ is closed down in~$(\vS,\le)$ (which implies consistency),%
   \COMMENT{}
   by requiring that if $\{\vr\}\in\F$ and $\vr < \vs$ then also $\{\vs\}\in\F$. For if a singleton star $\{\vr\}$ is in~$\F$, the idea is that the part of our structure to which $\vr$ points is too small to contain a highly cohesive substructure; and then the same should apply to all $\vs$ with $\vr < \vs$.%
   \COMMENT{}

But now we have a problem at the induction step: when forming the~$\F_i$, we now have to add not only $\{\vso\}$ or~$\{\svo{}\}$ to~$\F$, but all singleton stars $\{\vs\}$ with $\vso\le\vs$ or $\svo{}\le\vs$, respectively, to keep the $O_i^-$ closed down. This, then, spawns more problems: now both $T_i$ can have many leaves associated with a singleton star of $\F_i$ that is not in~$\F$.%
   \COMMENT{}
   Even if each of these occurs at most once, there is no longer an obvious way of how to merge $T_1$ and~$T_2$ into a single $S$-tree over~$\F$.

We shall deal with this problem as follows. Rather than adding singletons of the orientations of some fixed separation $s_0$ to~$\F$ to form the~$\F_i$ for $i=1,2$, we shall provisionally add, separately for $i=1,2$, some~$\{\rvi\}$ such that $\vri$ is minimal in~$\vS\sm(O^-\cup\vec D)$. This will most likely mean that $r_1\ne r_2$. But $\{\rvi\}$ can be associated with at most one leaf of~$(T'_i,\alpha'_i)$, because $\vri\notin O^-$ will still%
   \COMMENT{}
   be nontrivial (cf.\ Lemma~\ref{NewLemma}). 

If $r_1\ne r_2$, however, we shall no longer be able to combine the two $S$-trees over $\F\cup\{\rvone\}$ and $\F\cup\{\rvtwo\}$ into a single $S$-tree over~$\F$, as we did in the proof of the weak duality theorem: this step hinged on the fact that the two oriented separations whose singleton sets we added to~$\F$ were orientations of the same separation~$s_0$, which enabled us to think of each of these $S$-trees as decomposing one `side' of~$s_0$. (We illustrated this for the case of graph separations, where for $s_0=\{A,B\}$ the two $S$-trees decomposed the two sides $A$ and $B$ of the graph separately and could thus be joined into a single decomposition of the entire graph.) However, we shall be able to adapt that idea as follows.

Our two separations $\vri$ will be chosen nested, so that both point to some $s_0$ between them. We shall then modify the two $S$-trees over the~$\F_i = \F\cup\{\rvi\}$ into $S$-trees over $\F\cup\{\{\vso\}\}$ and~$\F\cup\{\{\svo\}\}$, respectively, by `shifting' the separations to which they map their edges to either side of~$s_0$, and then merge these shifted $S$-trees as before to obtain one over~$\F$.

To illustrate this, let us again consider the case that $S$ consists of separations of a graph, with $s_0 = \{A,B\}$ say. We shall have to turn the separations of~$G$ to which the first $S$-tree maps its edges into separations essentially of~$G[A]$ (though formally still of~$G$), those for the other $S$-tree into separations essentially of~$G[B]$ (but formally still of~$G$). The modified separations decomposing~$G[A]$ will be nested since they came from the first $S$-tree, and will in addition be nested with~$\{A,B\}$ (by definition: this is the result of shifting them). Likewise, the modified separations decomposing~$G[B]$ will be nested with each other and also with $\{A,B\}$. And the separations from the first collection cannot cross those from the second because they `lie on' different sides of~$\{A,B\}$. (In abstract terms: $\svo$ will point to the first lot, and $\vso$ to the second.) So the union of these two nested sets of separations, one from each of the two $S$-trees provided by the induction hypothesis, will also be nested, and in addition nested with~$\{A,B\}$. The two $S$-trees from which they come can thus be merged along the (unique) edge mapping to~$\{A,B\}$ into a single $S$-tree over~$\F$.

\medbreak

Let us now define this shifting operation. Consider a separation system $(\vS,\le\,,\!{}^*)$ contained in some universe of separations $(\vU,\le\,,\!{}^*,\vee,\wedge)$,%
   \COMMENT{}
   the ordering and involution on~$\vS$ being induced by those of~$\vU$. Let $(T,\alpha)$ be an $S$-tree rooted at a leaf~$x$, and let $\vso\in\vS$ be nondegenerate and such that $\alpha(\vex)\le\vso$. Then let $\alpha' = \alpha_{x,\vso}\colon \vec E(T)\to \vU$ be defined by setting
 $$\alpha'(\ve) := \alpha(\ve)\lor\vso\quad\text{if } \vex\le\ve$$%
   \COMMENT{}
   and letting $\alpha'(\ev):= \alpha'(\ve)^*$ for these~$\ve$. For example, we have $\alpha'(\vex) = \vso$ and $\alpha'(\evx) = \svo$.%
   \COMMENT{}

   \begin{figure}[htpb]
\centering
   	  \includegraphics{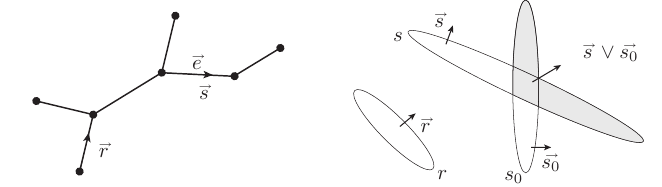}
   	  \caption{Shifting $\alpha(\ve) = \vs$ to $\alpha'(\ve) = \vs\vee\vso$}
   \label{fig:shiftsep}
   \end{figure}

Figure~\ref{fig:shiftsep} illustrates this for the case that $S$ consists of separations of a graph~$G$. If $\vso = (A,B)$, say, we would like to turn the separation $s$ of~$G$ into a separation `essentially' of~$G[B]$, the side of~$G$ to the right of~$s_0$ in the picture, that is nested with~$s_0$. There are two candidates for such a shift of~$s$: the separation $\vs\lor\vso$, which is indeed chosen, but potentially also the separation $\sv\lor\vso$. As long as $s$ is considered without a default orientation, it will not be clear which of these should be its shift. But, fortunately, $s$~has a default orientation in our scenario: we need shifts only of separations $s\in S$ that label an edge of our $S$-tree rooted at~$x$; and every {\em such} $s\in S$ can be oriented by default, as $\vs=\alpha(\ve)$ with $\ve\in T$ pointing away from~$x$, say. This is the orientation $\vs$ of~$s$ whose supremum with~$\vso$  we use in our definition of~$\alpha'$, a~choice which determines the shift not only of~$\vs$ but also of~$\sv$.

Informally, we think of a separation $\alpha'(\ve)$ as what becomes of $\alpha(\ve)$ when we `shift it across'~$\vso$. If $\alpha(\vex)$ is nontrivial and $(T,\alpha)$ is irredundant,%
   \COMMENT{}
   which will usually be the case, then these shifts $\alpha(\ve)\mapsto\alpha'(\ve)$ can be expressed by a map purely between separations in~$\vS$, without any reference to~$(T,\alpha)$. Our next aim is to define such a map.

Consider, instead of~$\vex\in\vec E(T)$ or $\alpha(\vex)$, an arbitrary nondegenerate {\em and nontrivial\/} separation~$\vr\in\vS$, and pick $\vso\in\vS$ with $\vr\le\vso$ as before. As $\vr$ is nontrivial%
   \COMMENT{}
   and nondegenerate, so is~$\vso$. Let $S_{\ge\vr}$ be the set of all separations $s\in S$ that have an orientation ${\vs\ge\vr}$. (Note that if $\vr=\alpha(\vex)$, with $(T,\alpha)$ an irredundant $S$-tree over stars rooted at~$x$, then $\vS_{\ge\vr}$ includes~$\alpha(\vec E(T))$ by Lemma~\ref{preservele}.) Since $\vr$ is nontrivial, only one of the two orientations $\vs$ of every $s\in S_{\ge\vr}\sm\{r\}$%
   \COMMENT{}
   satisfies $\vs\ge\vr$. Letting
 $$f\shift(\!\vr)(\vso) (\vs) := \vs\vee\vso\quad\text{and}\quad f\shift(\!\vr)(\vso) (\sv) := (\vs\vee\vso)^*$$
 for all $\vs \ge \vr$ in $\vSr\sm\{\rv\}$%
   \COMMENT{}
   thus defines a map $\vSr\to\vU$, the \emph{shifting map}~$f\shift(\!\vr)(\vso)$ (Fig.~\ref{fig:shiftsep}, right).%
   \COMMENT{}
   Note again that%
   \COMMENT{}
   $f\shift(\!\vr)(\vso)(\vr) = \vso$, since $\vr\le\vso$, and hence $f\shift(\!\vr)(\vso)(\rv) = \svo$.%
   \COMMENT{}
  In the case of $\vr = \vex$, with $(T,\alpha)$ an irredundant $S$-tree over stars rooted at~$x$, we then have 
 \begin{equation}\alpha' = f\shift(\!\vr)(\vso) \circ\ \alpha,\label{circ}
  \end{equation}
since for every edge $\ve\in\vec E(T)$ oriented away from~$x$ we have $\alpha(\ve)\in \vSr\sm\{\rv\}$, by Lemmas \ref{preservele} and~\ref{NewLemma},%
   \COMMENT{}
   so $f\shift(\!\vr)(\vso) $ maps $\alpha(\ve)$ to $\alpha(\ve)\lor\vso = \alpha'(\ve)$ and $\alpha(\ev) = \alpha(\ve)^*$ to its inverse $\alpha'(\ve)^* = \alpha'(\ev)$.

Our aim will be to show that $(T,\alpha')$ is another $S$-tree, and order-respecting if $(T,\alpha)$ is.%
   \COMMENT{}
   But we will need some assumptions to ensure this.

\begin{LEM}\label{lem:shifting}
Let $(T,\alpha)$ be an order-respecting $S$-tree, rooted at some leaf~$x$, and let $\vso\in\vS$ be nondegenerate and such that $\alpha(\vex)\le\vso$. Assume that $\alpha' = \alpha_{x,\vso}$ maps $\vec E(T)$ to~$\vS$. If $\vso$ is nontrivial in~$\vS$, or if the supremum in~$\vU$ of two separations that are trivial in~$\vS$ is never degenerate, then $(T,\alpha')$ is an order-respecting $S$-tree.
\end{LEM}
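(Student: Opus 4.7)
The plan is to verify, for the pair $(T,\alpha')$, the two defining properties of an order-respecting $S$-tree: that $\alpha'$ commutes with the involution of $\vS$, and that it preserves the natural partial ordering on $\vec E(T)$.

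First I would observe that $\alpha'$ is well-defined on all of $\vec E(T)$. Since $x$ is a leaf, no oriented edge $\vg$ of $T$ satisfies both $\vex\le\vg$ and $\vex\le\gv$: the edge $e_x$ itself is the minimal edge pointing away from~$x$, and its `inward' orientation $\evx$ is incomparable with~$\vex$. Hence for each edge $e$ exactly one of $\ve,\ev$ lies above~$\vex$, namely the orientation of $e$ pointing away from~$x$, and the two-clause definition assigns each $\vg\in\vec E(T)$ a single value. Commutation with the involution is immediate from the construction, and De~Morgan's law~\eqref{deMorgan} gives the explicit formula $\alpha'(\ve)=\alpha(\ve)\wedge\svo$ whenever $\ve$ points towards~$x$. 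Since the image is assumed to lie in~$\vS$ and $\vS$ inherits its involution from~$\vU$, the pair $(T,\alpha')$ is an $S$-tree.

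For the order-respecting property I would carry out a short case analysis, given $\ve\le\vf$, on the orientations of $\ve$ and $\vf$ relative to~$x$. The configuration `$\ve$ away, $\vf$ towards' is impossible, since $\vex\le\ve\le\vf$ would force $\vex\le\vf$ as well. If both $\ve,\vf$ point away from $x$, then $\alpha(\ve)\le\alpha(\vf)$ by order-respecting $(T,\alpha)$, and monotonicity of $\vee$ yields $\alpha'(\ve)=\alpha(\ve)\vee\vso\le\alpha(\vf)\vee\vso=\alpha'(\vf)$. If both point towards $x$, then $\fv\le\ev$ both point away; the previous case applied to $(\fv,\ev)$ gives $\alpha'(\fv)\le\alpha'(\ev)$, and a single application of the order-reversing involution delivers $\alpha'(\ve)\le\alpha'(\vf)$. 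The remaining mixed case, $\ve$ towards and $\vf$ away, is handled by the chain
\[
  \alpha'(\ve)\,=\,\alpha(\ve)\wedge\svo\,\le\,\alpha(\ve)\,\le\,\alpha(\vf)\,\le\,\alpha(\vf)\vee\vso\,=\,\alpha'(\vf),
\]
again using only the order-respecting property of $(T,\alpha)$ together with the definitions of $\wedge$ and $\vee$ as infimum and supremum in~$\vU$.

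The subtler point, where I expect the hypothesis `$\vso$ nontrivial in~$\vS$, or the supremum in~$\vU$ of two $\vS$-trivial separations is never degenerate' to enter, is in preventing a shifted value $\alpha(\ve)\vee\vso\in\vS$ from becoming degenerate in~$\vU$ in a way that would collide distinct edges of~$T$ under~$\alpha'$ and thereby disrupt the intended $S$-tree structure. I anticipate this being the main obstacle: one traces any such degeneracy back to joint $\vS$-triviality of $\vso$ and some $\alpha$-value below $\svo$, and then invokes whichever branch of the hypothesis is in force—nontriviality of $\vso$ gives an immediate contradiction, while the alternative forbids degenerate suprema of trivials directly. Once this possibility is ruled out, the verifications above exhibit $(T,\alpha')$ as the required order-respecting $S$-tree.
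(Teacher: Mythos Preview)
Your argument is essentially the paper's: the case analysis for order-preservation is the same (the paper splits on whether $\vex\le\ve$, arriving at the same three live cases and the same chain $\alpha(\ve)\wedge\svo\le\alpha(\ve)\le\alpha(\vf)\le\alpha(\vf)\vee\vso$ in the mixed case), and the hypothesis is used exactly as you anticipate, to rule out a degenerate value $\alpha(\ve)\vee\vso$ by observing that both $\alpha(\ve)$ and~$\vso$, being nondegenerate, would then be strictly below this degenerate element and its inverse, hence trivial in~$\vS$.

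One clarification on your last paragraph: the problem with a degenerate $\alpha'(\ve)$ is not that it would ``collide distinct edges''. An $S$-tree may repeat values freely, and $\alpha'(\ve)=\alpha'(\ev)$ does not by itself violate any of the axioms you checked. The paper simply treats nondegeneracy as part of what must be verified for $(T,\alpha')$ to be an $S$-tree; the real payoff is downstream in Lemma~\ref{lem:shifttree}, where the images $\alpha'(\vec F_t)$ must be stars, and stars consist of nondegenerate separations by definition. Your sketch of how degeneracy forces joint triviality of $\vso$ and $\alpha(\ve)$ is the right mechanism; just replace the ``collision'' rationale with this one.
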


\begin{proof}
By definition, $\alpha'$~commutes with the involutions on $\vec E(T)$ and~$\vS$. Let us show that if $(T,\alpha)$ is order-respecting then~$\alpha'$, too, respects the ordering of~$\vec E(T)$. Consider edges $\ve < \vf$ in~$\vec E(T)$.%
   \COMMENT{}
   Suppose first that $\vex\le\ve$. Then $\alpha'(\vf) = \alpha(\vf)\lor\vso\ge\alpha(\ve)\lor\vso = \alpha'(\ve)$,%
   \COMMENT{}
   as desired. We now assume that $\vex\not\le\ve$.

If $\vex\le\fv$ then $\vex\le\fv < \ev$, which reduces to the case above on renaming $\fv$ as~$\ve$ and $\ev$ as~$\vf$. We may thus assume that $\vex\not\le\fv$, so that $\vex\le\vf$.

Now $\alpha'(\ve) = (\alpha(\ev)\lor\vso)^* = \alpha(\ve)\land\svo\le\alpha(\ve)\le\alpha(\vf)\le \alpha(\vf)\lor\vso = \alpha'(\vf)$ by~\eqref{deMorgan}.%
   \COMMENT{}

In order for $(T,\alpha')$ to be an $S$-tree it remains to show that $\alpha'(\ve)$ is nondegenerate for every $\ve\in\vec E(T)$. But if $\alpha'(\ve) = \alpha(\ve)\lor\vso$ is degenerate, it is distinct from both $\alpha(\ve)$ and~$\vso$ (which are nondegenerate by assumption)%
   \COMMENT{}
   and hence strictly greater than these. But then $\alpha(\ve)$ and $\vso$ are both trivial in~$\vS$,%
   \COMMENT{}
   so their supremum $\alpha'(\ve)$ is nondegenerate by assumption.
\end{proof}

In the premise of Lemma~\ref{lem:shifting} we assumed that $\alpha'$ maps $\vec E(T)$ to~$\vS$. Let us now define some conditions that ensure this. Let us say that $\vso\in\vS$ \emph{emulates $\vr\in\vU$%
   \COMMENT{}
   in~$\vS$} if $\vso\ge\vr$ and every $\vs\in\vS\sm\{\rv\}$%
   \COMMENT{}
   with $\vs\ge\vr$ satisfies $\vs\vee\vso\in\vS$.%
   \COMMENT{}
Applied with $\vr = \alpha(\vex)$ and $\vs = \alpha(\ve)$ for $\vex\le\ve\in\vec E(T)$, these conditions will help%
   \COMMENT{}
   ensure that $\alpha'$ takes $\vec E(T)$ to~$\vS$; see Lemma~\ref{lem:shifttree} below.

\begingroup\lineskiplimit=-2pt
Finally, we need a condition on~$\F$ to ensure that the shifts of multistars of separations associated with nodes of~$T$ are not only again multistars but also induce stars in~$\F$. Given any set $\F\sub 2^\vU\!$ of stars, let us say that a separation $\vso\in\vS$ \emph{emulates} $\vr\in\vU$ in~$\vS$ {\em for~$\F$\/} if $\vso$ emulates $\vr$ in~$\vS$%
   \COMMENT{}
    and for any star $\sigma\sub \vSr\sm\{\rv\}$ in~$\F$ that has an element $\vs\ge\vr$%
   \COMMENT{}
   we also have $f\shift(\!\vr)(\vso) (\sigma)\in\F$.%
   \COMMENT{}%
   \footnote{\label{FNweakerFlinked}In fact, we could make do with less: that $f\shift(\!\vr)(\vso)$ is defined (with image in~$\vS$, as now) only on some symmetric subset of $\vSr$ that contains~$\bigcup\F$, and that for some fixed $S'\sub S$ and every $\sigma$ as above we have $f\shift(\!\vr)(\vso)(\sigma)\cap\vSdash\in\F$ if $f\shift(\!\vr)(\vso)(\vs)\in\vSdash$ for the unique $\vs\ge\vr$ in~$\sigma$.%
   \COMMENT{}
   Informally, we need not insist that our $S$-trees over~$\F$ shift to $S$-trees over~$\F$ in their entirety, as long as those shifts of their separations $\alpha(\ve)$ that lie in some specified set $\vSdash\sub\vS$ contain an $S$-tree over~$\F$ between them. It will be easy to adapt the proof of Theorem~\ref{thm:strong} should this ever be necessary.%
   \COMMENT{}
   We shall get back to this right at the end, in Section~\ref{sec:essence}.}

\endgroup

We now have all the ingredients needed to shift a suitably prepared $S$-tree:%
   \COMMENT{}

\begin{LEM}\label{lem:shifttree}
  Let $\F\sub 2^\vU\!$ be a set of stars.%
   \COMMENT{}
   Let $(T,\alpha)$ be a tight%
   \COMMENT{}
   and irredundant\penalty-200\ ${S}$-tree over~$\F$ with at least one edge, rooted at a leaf~$x$.%
   \COMMENT{}
   Assume that $\vr :=\alpha(\vex)$ is nontrivial and nondegenerate, let $\vso\in\vS$ emulate $\vr$ in~$\vS$ for~$\F$,
    and consider $\alpha' := \alpha_{x,\vso}$. Then $(T,\alpha')$ is an order-respecting $S$-tree over~$\F\cup\{\{\svo\}\}$, in which $\{\svo\}$ is a star associated with~$x$ but with no other leaf of~$T$.
\end{LEM}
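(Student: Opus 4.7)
My plan is to deduce the lemma in three stages: verify the hypotheses of Lemma~\ref{lem:shifting} to obtain that $(T,\alpha')$ is an order-respecting $S$-tree; show that each image $\alpha'(\vec F_t)$ lies in $\F\cup\{\{\svo\}\}$ via the emulation condition for~$\F$; and check that $\{\svo\}$ is associated only with the leaf~$x$.

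First I observe that $\vso$ itself is nondegenerate and nontrivial in~$\vS$: any witness $s\in S$ to degeneracy or triviality of $\vso$ would, via $\vr\le\vso$, also witness the same property of~$\vr$, contradicting our hypothesis on~$\vr$. By Lemma~\ref{preservele}, $(T,\alpha)$~is order-respecting, so every edge $\ve\in\vec E(T)$ oriented away from the root~$x$ satisfies $\vex\le\ve$ and hence $\alpha(\ve)\ge\vr$. By the last assertion of Lemma~\ref{onlyedge}, applied to our already tight and irredundant $(T,\alpha)$, $\vex$ is the only edge of~$T$ with $\alpha$-value~$\vr$; by inversion $\evx$ is then the only edge with value~$\rv$. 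Since $\evx$ points towards~$x$, no away-from-$x$ orientation $\ve$ has $\alpha(\ve)=\rv$, so $\alpha(\ve)\in\vSr\sm\{\rv\}$. The emulation hypothesis therefore gives $\alpha'(\ve)=\alpha(\ve)\vee\vso\in\vS$, and the involution law extends this to all of~$\vec E(T)$. Lemma~\ref{lem:shifting} now applies and delivers that $(T,\alpha')$ is an order-respecting $S$-tree.

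Next I analyse $\alpha'(\vec F_t)$ for each node~$t$. For $t=x$: $\vec F_x=\{\evx\}$ and $\alpha'(\evx)=(\vr\vee\vso)^*=\svo$, so $\alpha'(\vec F_x)=\{\svo\}$. For $t\ne x$: let $t_0$ be the $x$-side neighbour of~$t$; then $(t_0,t)$ is the unique member of $\vec F_t$ pointing away from~$x$, while every other $\vf\in\vec F_t$ has its inverse $\fv$ pointing away from~$x$. A direct calculation using~\eqref{deMorgan} gives $\alpha'(\vec F_t)=f\shift(\!\vr)(\vso)(\sigma_t)$ where $\sigma_t:=\alpha(\vec F_t)\in\F$. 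Containment $\sigma_t\sub\vSr$ is immediate from order-respecting, and $\rv\notin\sigma_t$ because the unique edge with $\alpha$-value~$\rv$ is $\evx\in\vec F_x$, which is not in~$\vec F_t$. Since $\sigma_t$ is a star in~$\F$ containing the element $\alpha(t_0,t)\ge\vr$, the emulation condition for~$\F$ yields $f\shift(\!\vr)(\vso)(\sigma_t)\in\F$, as required.

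Finally, suppose a leaf $t\ne x$ has $\alpha'(\vec F_t)=\{\svo\}$. Its sole incident edge $(t',t)$ points away from~$x$, so the equation $\alpha(t',t)\vee\vso=\svo$ gives $\vr\le\vso$ and $\vr\le\svo$. If both inequalities are strict, then $s_0$ witnesses $\vr$ trivial, contradicting our hypothesis. The case $\vr=\vso$ together with the equation and $\alpha(t',t)\ge\vr$ forces $\alpha(t',t)=\rv$, so by the uniqueness of the $\rv$-edge established above we get $(t',t)=\evx$ and hence $t=x$, a contradiction. The case $\vr=\svo$ yields $\rv=\vso\ge\vr$, making $\vr$ small, and the equation then reduces to $\rv\le\vr$, forcing $\vr=\rv$ and contradicting the nondegeneracy of~$\vr$. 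The main obstacle of the proof is this final case analysis, where the interplay of smallness, triviality and degeneracy of $\vr$ and~$\vso$ must be tracked with care; the propagation of nontriviality from $\vr$ to~$\vso$ performed in the first stage is precisely what keeps each branch of the analysis closed.
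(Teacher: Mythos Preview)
Your proof is correct and follows the same three-stage approach as the paper: verify the hypotheses of Lemma~\ref{lem:shifting}, apply the emulation-for-$\F$ condition to the stars at nodes $t\ne x$ via~\eqref{circ}, and rule out a second leaf associated with~$\{\svo\}$. The only cosmetic differences are that you lean on Lemma~\ref{onlyedge} where the paper first cites Lemma~\ref{NewLemma}, and that your final step is an explicit three-way case split on equality in $\vr\le\vso$ and $\vr\le\svo$ while the paper compresses this into the single chain $\vr<\vso=\vrdash\wedge\svo\le\svo$ (with the strictness of $\vr<\svo$ left implicit).
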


\begin{proof}
  By Lemma~\ref{preservele}, $(T,\alpha)$ is order-respecting. For every $\ve\in\vec E(T)$ with $\vex\le\ve$ we therefore have $\alpha(\ve)\ge\alpha(\vex)=\vr$, as well as $\alpha(\ve)\in\vS\sm\{\rv\}$ by Lemma~\ref{NewLemma}. As $\vso$ emulates~$\vr$ in~$\vS$, these two facts imply that $\alpha'$ takes $\vec E(T)$ to~$\vS$. As $\vr = \alpha(\vex)$ is nontrivial and nondegenerate, $\vso\ge\vr$ is nondegenerate and nontrivial. By Lemma~\ref{lem:shifting}, therefore, $(T,\alpha')$~is an order-respecting $S$-tree.

By definition of~$\alpha'$, we have $\alpha'(\vex) = \vr\lor\vso = \vso$. Hence, $x$~is associated in $(T,\alpha')$ with~$\{\svo\}$, which is a star since $s_0$ is nondegenerate. The other nodes $t$ of~$T$ are associated in~$(T,\alpha')$ with stars in~$\F$ because $\vso$ emulates~$\vr$ for~$\F$: note that $\alpha$ maps the stars $\vec F_t$ to $\vSr\sm\{\rv\}$ by Lemmas \ref{preservele}, \ref{NewLemma}%
   \COMMENT{}
   and~\ref{onlyedge},%
   \COMMENT{}
   that $\alpha(\vec F_t)$ contains a separation $\vs\ge\vr$ since $\vec F_t$ contains an edge~$\ve\ge\vex$, and that $\alpha' = f\shift(\!\vr)(\vso) \circ\ \alpha$ by~\eqref{circ}.%
   \COMMENT{}

Suppose finally that $\{\svo\}$ is associated in~$(T,\alpha')$ also with another leaf $y\ne x$ of~$T$, with incident edge $(t,y)$~say.%
   \COMMENT{}
   Let $\{\rvdash\}$ be associated with $y$ in~$(T,\alpha)$. Then $\vrdash \le \rv$ since $(T,\alpha)$ is order-respecting. If $\vr = \vso$ then $\alpha'=\alpha$, so $\{\svo\} = \{\rv\}$ is associated with $x$ and $y$ also in $(T,\alpha)$, contradicting Lemma~\ref{NewLemma}. Hence
 $$\vr < \vso = \alpha'(y,t) = (\alpha(t,y)\lor\vso)^* = (\rvdash\lor\vso)^* \specrel={\eqref{deMorgan}} \vrdash\land\svo \le \svo,$$
   where the first `=' holds by definition of~$y$, and the third by the definition of~$\vrdash$ and of $\alpha'$ based on the fact that $\vex\le (t,y)$.%
   \COMMENT{}
   Thus, $s_0$ witnesses that $\vr$ is trivial in~$\vS$, contrary to assumption.%
   \COMMENT{}
   \end{proof} 

Let us say that $\F$ {\it forces\/} the separations $\vs\in\vS$ for which $\{\sv\}\in\F$.%
   \COMMENT{}
   We say that $\vS$ is \emph{$\F$-separable}%
   \COMMENT{}
   if for every two nontrivial and nondegenerate%
   \COMMENT{}
   $\vr,\rvdash\in\vS$ that are not forced by~$\F$%
   \COMMENT{}
   and satisfy $\vr\le\vrdash$ there exists an $s_0\in S$ with an orientation~$\vso$ that emulates $\vr$ in~$\vS$ for~$\F$ and such that $\svo$ emulates $\rvdash$ in~$\vS$ for~$\F$. As earlier, any such~$s_0$ also be nondegenerate and have no trivial orientation.%
   \COMMENT{}%
   \footnote{However it can happen that $\vr < \rv\le\vso\ (\le\vrdash)$. Then $\rvdash\le\svo\le\vr$ as well as, by assumption, $\rvdash\le\rv$, so the nontriviality of~$\rvdash$ implies that $r=r'$. Then $\vr < \rv\le\vso\le\vrdash$ with equality in both cases, giving $\vr=\rvdash$.}%
   \COMMENT{}

We can now strengthen our weak duality theorem so as to yield consistent orientations, provided that $\vS$ is $\F$-separable. Recall that for a separation system $(\vS,\le\,,\!{}^*)$ and a set~$\F$,%
   \COMMENT{}
   an orientation $O$ of~$S$ is called an \emph{$\F$-tangle} if it is consistent and avoids~$\F$, that is, if $2^O\cap\F = \emptyset$. Let us call $\F$ \emph{standard} for~$\vS$ if it forces all $\vr\in\vS$ that are trivial in~$\vS$, i.e., contains the singleton stars~$\{\rv\}$ of their inverses.%
   \COMMENT{}

\begin{THM}[Strong Duality Theorem]\label{thm:strong} 
  Let $(\vU,\le\,,\!{}^*,\vee,\wedge)$ be a universe of  separations containing a finite separation system $(\vS,\le\,,\!{}^*)$. Let $\F\sub 2^\vU\!\!$ be a set of stars, standard for~$\vS$. If $\vS$ is $\F$-separable,%
   \COMMENT{}
   exactly one of the following assertions holds:\looseness=-1
\begin{enumerate}[\rm(i)]\itemsep0pt
  \item There exists an $\F$-tangle of~$S$.
  \item There exists an ${S}$-tree over $\F$.
  \end{enumerate}
\end{THM}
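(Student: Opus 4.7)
By Lemma~\ref{lem:notboth}, (i) and (ii) cannot both hold, since an $\F$-tangle is in particular an $\F$-avoiding orientation of~$S$. So I would assume no $\F$-tangle exists and construct an $S$-tree over~$\F$. Set $O^- := \{\vs\in\vS : \{\sv\}\in\F\}$; this partial orientation must be contained in any $\F$-tangle, and since $\F$ is standard, $O^-$ already contains the inverses of all trivial separations. The induction will run on $|S\sm D|-|O^-|$, where $D$ is the set of degenerate separations. If $O^-$ is not antisymmetric, a $K_2$ whose two oriented edges map to $\vs$ and~$\sv$ for a common $s$ gives an $S$-tree over~$\F$. In the base case $|S\sm D|=|O^-|$, the only candidate orientation is $O^-\cup\vec D$: if this is consistent and avoids~$\F$ we have an $\F$-tangle; if it contains a star in~$\F$ a single-centre star tree yields~(ii) as in the proof of Theorem~\ref{thm:weak}; and if it is inconsistent, the inductive construction below specialises (applied to the inconsistent pair via $\F$-separability) to produce the required tree.

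For the main inductive step, the plan is to pick two separations $\vr_1,\vr_2$ that are both minimal in $\vS\sm(O^-\cup\vec D)$, with $r_1\ne r_2$ and nested in such a way that $\vr_1\le \rv_2$. Applying $\F$-separability to the pair $\vr_1\le\rv_2$ then yields an $s_0\in S$ whose orientation $\vso$ emulates $\vr_1$ for~$\F$ while $\svo$ emulates $\rv_2$ for~$\F$; this gives $\vr_1\le\vso\le\vr_2$ with $s_0$ nontrivial and nondegenerate. Next, define $\F_i := \F\cup\{\{\rv_i\}\}$ for $i=1,2$. Each $\F_i$ is again standard, and $\vS$ remains $\F_i$-separable; meanwhile $O^-_i := O^-\cup\{\vr_i\}$ is strictly larger than~$O^-$. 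Since every $\F_i$-tangle would avoid $\F\sub\F_i$ and hence be an $\F$-tangle, of which none exists, the induction hypothesis supplies $S$-trees $(T_i,\alpha_i)$ over~$\F_i$. Using Lemmas~\ref{prune} and~\ref{onlyedge}, I may reduce each $(T_i,\alpha_i)$ to a tight, irredundant $S$-tree over~$\F_i$ rooted at a leaf $x_i$ whose associated star is the unique instance of $\{\rv_i\}$, with pendant edge the unique one mapped to~$\vr_i$. Lemma~\ref{lem:shifttree} now licenses shifting $(T_1,\alpha_1)$ across~$\vso$ to produce an $S$-tree $(T_1,\alpha'_1)$ over $\F\cup\{\{\svo\}\}$ in which $x_1$ is the unique leaf associated with~$\{\svo\}$; symmetrically, $(T_2,\alpha_2)$ shifts across~$\svo$ to an $S$-tree $(T_2,\alpha'_2)$ over $\F\cup\{\{\vso\}\}$ with $x_2$ the unique leaf associated with~$\{\vso\}$. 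I then merge the two shifted trees along~$s_0$ exactly as at the end of the proof of Theorem~\ref{thm:weak}: delete $x_1$ and~$x_2$ and insert a single new edge between their former neighbours, mapping its orientations to $\vso$ and~$\svo$. The result is an $S$-tree over~$\F$.

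The hard part is threefold. First, one must secure the existence of a suitable nested pair $\vr_1\le\rv_2$ of minimal unforced separations: minimality and nestedness are together restrictive, and the $s_0$ obtained from $\F$-separability must genuinely lie strictly between $\vr_1$ and~$\vr_2$ in the right sense. Second, one must verify that $\F_i$ inherits standardness and $\F_i$-separability from $\F$: for separability this means checking that emulation with respect to $\F$ still entails emulation with respect to the enriched $\F_i$, which is true because the new singleton $\{\rv_i\}$ only concerns the single separation $\vr_i$ itself. Third, the base case in which $O^-\cup\vec D$ is inconsistent must be dispatched by a degenerate variant of the same shift-and-merge. The conceptual core of the proof, however, is Lemma~\ref{lem:shifttree}: it converts an $S$-tree whose \emph{only} obstruction to being over $\F$ is a single leaf-star $\{\rv_i\}$ into one over $\F\cup\{\{\svo\}\}$ or $\F\cup\{\{\vso\}\}$, after which the two sides can be spliced along~$s_0$ to kill both anomalies at once---precisely the move that is unavailable in the weak theorem.
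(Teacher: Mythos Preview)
Your overall plan matches the paper's proof: define $O^-$, induct on the number of unforced separations, find a nested minimal pair, use $\F$-separability to get~$s_0$, add singletons to form~$\F_i$, apply induction, shift via Lemma~\ref{lem:shifttree}, and splice. Two points, however, are not yet right.

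\medskip
\textbf{Why $\vS$ is $\F_i$-separable.} Your stated reason---``the new singleton $\{\rv_i\}$ only concerns the single separation $\vr_i$ itself''---is not what makes this work, and in fact the concern is precisely the opposite: when you test $\F_i$-separability at some pair $\vr\le\vrdash$ with witness~$s_1$, the shift $f\shift(\vr)(\vsone)$ will typically send $\{\rv_i\}$ to a \emph{different} singleton $\{\sv\}$ with $\vs<\vr_i$, and you must show $\{\sv\}\in\F_i$. The reason this holds is exactly the \emph{minimality} of $\vr_i$ in $\vS\sm(O^-\cup\vec D)$: since $\vs<\vr_i$, minimality forces $\vs\in O^-\cup\vec D$; one then checks $\vs\notin\vec D$ (else $\rv_i$ would be trivial, hence in~$O^-$), so $\vs\in O^-$ and $\{\sv\}\in\F\sub\F_i$. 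This is the heart of the induction step, and the minimality of~$\vr_i$ is doing double duty: it both secures nestedness and makes $\F_i$-separability go through.

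\medskip
\textbf{Obtaining the nested minimal pair.} You flag this as hard but do not say how to do it, and you also impose $r_1\ne r_2$, which is neither needed nor always achievable. The paper's device is simple: pick \emph{any} $r_0\in R$, then take $\vr_1\le\vr_0$ minimal in $\vS\sm(O^-\cup\vec D)$ and $\rv_2\le\rv_0$ minimal there; this gives $\vr_1\le\vr_0\le\vr_2$ automatically, with both $\vr_1,\rv_2$ unforced, nontrivial, and nondegenerate. (It can happen that $\vr_1=\rv_2$, and that is fine.)

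\medskip
A smaller point: your base case ``if $O^-\cup\vec D$ is inconsistent, the inductive construction specialises'' is essentially the paper's separate preliminary argument that $O^-$ (and then $O^-\cup\vec D$) is consistent. That argument uses $\F$-separability directly on an inconsistent pair $\rv,\vrdash\in O^-$ with $\vr<\vrdash$: the emulating~$s_0$ then has both $\{\vso\}$ and $\{\svo\}$ in~$\F$ (as shifts of $\{\vr\}$ and~$\{\rvdash\}$), contradicting antisymmetry of~$O^-$. It is cleanest to establish this once, before the induction, rather than fold it into the base case.
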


\noindent
   We remark that, by Lemma~\ref{prune}, the $S$-tree in~(ii) can be chosen irredundant, in which case it will be order-respecting (Lemma~\ref{preservele}).

\begin{proof}
  Since replacing $\F$ with $\F\cap 2^\vS$ leaves the validity of both (i) and (ii) unchanged we may, and shall, assume that $\F\sub 2^\vS$.%
   \COMMENT{}
   By Lemma \ref{lem:notboth}, (i) and (ii) cannot both hold; we show that at least one of them holds.

Since $\F$ is standard, the set
 $$O^- := \{\,\vs\mid \{\sv\}\in\F\,\}$$
 of separations that $\F$ forces contains all the trivial separations in~$\vS$. But it contains no degenerate ones, because the $\{\sv\}\in\F$ are stars. If $O^-\supe \{\vs,\sv\}$ for some~$s\in S$, then $(T,\alpha)$ with $T=K_2$ and ${\rm im}\,\alpha=\{\vs,\sv\}$ is an $S$-tree over~$\F$. We may thus assume that $O^-$ is antisymmetric: a partial orientation of $S\sm D$, where $D$ is the set of degenerate elements of~$S$. 

Let us show that $O^-$ is consistent. If not, then $O^-$ contains some $\rv$ and~$\vrdash$ such that $\vr < \vrdash$. As $O^-$ is antisymmetric, it then does not contain their inverses $\vr$ and~$\rvdash$.%
   \COMMENT{}
  So $\F$ does not force these; in particular, they are nontrivial.%
   \COMMENT{}
   Since $\vS$ is $\F$-separable, there exists an $s_0\in S$ with orientations~$\vso,\svo$ such that $\vso$ emulates $\vr$ in~$\vS$ for~$\F$ and $\svo$ emulates $\rvdash$ in~$\vS$ for~$\F$. Since $r$ is not degenerate%
   \COMMENT{}
   and $\vso$ emulates $\vr$ for~$\F$, the singleton star $\{\vr\}\in\F$%
   \COMMENT{}
   shifts to $\{f\shift(\!\vr)(\vso) (\vr)\} = \{\vso\}\in\F$, so $\svo\in{O^-}$. Likewise, since $r'$ is not degenerate and $\svo$ emulates~$\rvdash$ we have $\vso\in{O^-}$. This contradicts our assumption that ${O^-}$ is antisymmetric.

Let us show that $O^-\cup \vec D$ is still consistent. Suppose $r\ne s$ are such that $\rv,\vs\in O^-\cup\vec D$ and $\vr < \vs$. Then $r$ and $s$ are not both in~$D$, since that would imply $\vr < \vs = \sv < \rv = \vr$. Since $O^-$ is consistent, we may thus assume that $\rv\in O^-$ and $\vs\in\vec D$ (or vice versa, which is equivalent by~\eqref{invcomp}).%
   \COMMENT{}
   Then $\vr$ is trivial, as $\vr < \vs = \sv$. Hence $\vr\in O^-$ as well as, by assumption, $\rv\in O^-$.%
   \COMMENT{}
   This contradicts our assumption that $O^-$ is antisymmetric.

\medbreak

Let $R$ be the set of separations in $S\sm D$ of which neither orientation lies in~$O^-$. We shall apply induction on $|R|$%
   \COMMENT{}
   to show that (i) or~(ii) holds whenever $\F$ is such that $O^-$ is antisymmetric.%
   \COMMENT{}
   If $|R|=0$, then $O^-\cup\vec D$ is either itself an $\F$-tangle of~$S$ or contains a star $\sigma\in\F$. Then $\sigma\sub O^-$, since stars have no degenerate elements. By definition of~$O^-$, and since $O^-$ is antisymmetric, $\sigma$~is not a singleton subset of~$O^-$%
   \COMMENT{}
   (though it may be empty). Let $T$ be a star of $|\sigma|$ edges with centre~$t$, say, and let $\alpha$ map its oriented edges $(x,t)$ bijectively to~$\sigma$. Then $(T,\alpha)$ satisfies~(ii).%
   \COMMENT{}

For the induction step,%
   \COMMENT{}
   pick $\vro\in\vec R$.%
   \COMMENT{}
   Then neither $\vro$ nor $\rvo$ lies in~$O^-\cup\vec D$; let $\vrone\le\vro$ and $\rvtwo\le\rvo$ be minimal in~$\vS\sm (O^-\cup\vec D)$.%
   \COMMENT{}
   Then $\vrone\le\vro\le\vrtwo$. As $\F$ forces neither $\vrone$ nor~$\rvtwo$%
   \COMMENT{}
     and $\vS$ is $\F$-separable, there exists an $s_0\in S\sm D$%
   \COMMENT{}
   with nontrivial%
   \COMMENT{}
   orientations~$\vso,\svo$ such that $\vso$ emulates $\vrone$ in~$\vS$ for~$\F$ and $\svo$ emulates $\rvtwo$ in~$\vS$ for~$\F$.%
   \COMMENT{}%
   \COMMENT{}%
   \COMMENT{}

Since $O^-$ is antisymmetric, it does not contain both $\vso$ and~$\svo$. Let us assume that $\svo\notin O^-$, i.e.\ that $\{\vso\}\notin\F$. Then $\{\vrone\}\notin\F$, because $\vso$ emulates $\vrone$ for~$\F$ and $f\shift(\vrone)(\vso)$ maps the star $\{\vrone\}\sub \vSrone\sm\{\rvone\}$ to~$\{\vso\}$. Thus, $\vrone$ and $\rvone$ both lie outside $O^-\cup\vec D$, so $r_1\in R$.

We can now hope to apply the induction hypothesis to~$\F_1 := \F\cup\{\{\rvone\}\}$, because
 $$O^-_1 := O^-\cup \{\vrone\} =  \{\,\vs\mid \{\sv\}\in\F_1\,\}$$
 is again antisymmetric,%
   \COMMENT{}
   and the set $R_1$ of separations in $S\sm D$ with neither orientation in $O^-_1$ is smaller than~$R$. Also, $\F_1$~is a standard set of stars, because $\F$~is and $r_1\notin D$.%
   \COMMENT{}
   But we still have to check that $\vS$ is $\F_1$-separable. 

   \begingroup\lineskiplimit=-3pt
To do so, consider (nontrivial and) nondegenerate separations $\vr,\rvdash\in\vS$ not forced by~$\F_1$ such that $\vr\le\vrdash$. We have to find an $s_1\in S$ with an orientation~$\vsone$ that emulates $\vr$ in~$\vS$ for~$\F_1$ and such that $\svone$ emulates $\rvdash$ in~$\vS$ for~$\F_1$. By assumption, there is such an $s_1\in S$ for $\vr$ and $\vrdash$ with respect to~$\F$; let us take this~$s_1$, with orientations $\vsone,\svone$ such that $\vsone$ emulates $\vr$ for~$\F$ and $\svone$ emulates $\rvdash$ for~$\F$. We have to show that this emulation extends to~$\F_1$, i.e., that for the unique star $\{\rvone\}$ in $\F_1\sm\F$ we have $\{f\shift(\!\vr)(\vsone) (\rvone)\}\in\F_1$ if $\vr\le\rvone\ne\rv$%
   \COMMENT{}
   (so that $\{\rvone\}\sub \vSr\sm\{\rv\}$),%
   \COMMENT{}
   and $\{f\shift(\rvdash)(\svone) (\rvone)\}\in\F_1$ if $\rvdash\le\rvone\ne\vrdash$. In either case, the image $\sv$ of~$\rvone$ under the relevant map is either equal to~$\rvone$ (in which case we are done)%
   \COMMENT{}
   or greater,%
   \COMMENT{}
   by definition of the shift operator$\,\downarrow$. If $\rvone < \sv$, then $s\notin D$, since otherwise $\rvone < \vs=\sv$ would be trivial%
    \COMMENT{}
   and hence in~$O^-$.%
   \COMMENT{}
   And $\vs < \vrone$ by~\eqref{invcomp}, so $\vs\in O^-\cup\vec D$ by the minimality of $\vrone$ in~$\vS\sm (O^-\cup\vec D)$. Thus $\vs\in O^-$, and hence ${\{\sv\}\in\F\sub\F_1}$, by the definition of~$O^-$. This completes our proof that $\vS$ is $\F_1$-separable.

   \endgroup
We can thus apply the induction hypothesis to~$\F_1$. If it returns an $\F_1$-tangle of~$S$, then this is our desired $\F$-tangle. So we may assume that it returns an $S$-tree $(T_1,\alpha)$ over~$\F_1$. If this $S$-tree is even over~$\F$, our proof is complete. We may thus assume that $T_1$ has a leaf $x_1$ associated with~$\{\rvone\}$. We now apply Lemma~\ref{onlyedge} to prune and contract $(T_1,\alpha)$ to a tight and irredundant $S$-tree over~$\F_1$ that still contains~$x_1$. In this $S$-tree, which for simplicity we continue to call $(T_1,\alpha)$, no leaf other than~$x_1$ is associated with~$\{\rvone\}$ (Lemma~\ref{NewLemma}). Let $\alpha'_1 := \alpha_{x_1,\vso}$. By Lemma~\ref{lem:shifttree}, $(T_1,\alpha'_1)$ is an $S$-tree over $\F\cup\{\{\svo\}\}$, in which the star $\{\svo\}$%
   \COMMENT{}
   is associated with~$x_1$ but with no other leaf of~$T_1$. All the other nodes of $T_1$ are therefore associated with stars in~$\F$.%
   \COMMENT{}

If $\{\svo\}\in\F$,%
   \COMMENT{}
   then $(T_1,\alpha'_1)$ is in fact an $S$-tree over~$\F$, completing our proof. We may thus assume that $\{\svo\}\notin\F$, or equivalently that $\vso\notin O^-$. We can now use the induction hypothesis exactly as above (where we assumed that $\svo\notin O^-$),%
   \COMMENT{}
   considering~$\rvtwo$ in the same way as we just treated~$\vrone$, to obtain an irredundant $S$-tree $(T_2,\alpha'_2)$ over $\F\cup\{\{\vso\}\}$ in which $\{\vso\}$ is associated with a unique leaf~$x_2$, and all the other nodes are associated with stars in~$\F$.%
   \COMMENT{}

These trees can now be combined to the desired $S$-tree $(T,\alpha')$ over~$\F$ as in the proof of Theorem~\ref{thm:weak}:%
   \COMMENT{}
   add to the disjoint union $(T_2 - x_2)\cup (T_1 - x_1)$ the edge $y_2y_1$ between the neighbour $y_2$ of $x_2$ in $T_2$ and the neighbour $y_1$ of $x_1$ in~$T_1$, put $\alpha'(y_2,y_1) := \vso$ and $\alpha'(y_1,y_2) := \svo$, and otherwise let $\alpha'$ extend $\alpha'_1$ and~$\alpha'_2$.
\end{proof}

\section{\boldmath Essential $S$-trees and $\F$-tangles: a refinement}\label{sec:essence}

Let us return to the question of how much of a restriction is our condition in the premise of the strong duality theorem that $\F$ must be standard for~$\vS$, i.e., contain all co-trivial singletons, the stars~$\{\sv\}$ for which $\vs$ is trivial in~$\vS$. As noted before, any consistent orientation of~$S$, and hence any $\F$-tangle, will contain all trivial separations and hence avoid all these singletons. So adding them to~$\F$ will not change the set of $\F$-tangles.

But neither would removing them. Which thus seems like a good idea, if only to avoid unnecessary clutter.

Removing the co-trivial singletons from~$\F$ would, however, have an impact on the set of $S$-trees over~$\F$. The leaves of an $S$-tree over a standard~$\F$ can be associated with any co-trivial singleton star, but if we remove these stars from~$\F$ then such an $S$-tree will no longer be over~$\F$.

We might try to repair this by removing those leaves from our $S$-tree $(T,\alpha)$. The edge which such a leaf sends to its neighbour~$t$, however, maps to a separation~$\vr$ that would then be missing from the star $\alpha(\vec F_t)\in\F$ associated with~$t$, perhaps knocking it out of~$\F$. But as $\vr$ is trivial, its membership in $\alpha(\vec F_t)$ will not be the reason why we put $\alpha(\vec F_t)$ in~$\F$ in the first place: if the role of an $S$-tree over~$\F$ is to witness the nonexistence of an $\F$-tangle, then only the nontrivial separations in its stars are essential for that role. So let's try to delete all trivial separations from stars in~$\F$, and see if we can retain an $S$-tree over the modified~$\F$.

Given a separation system $(\vS,\le\,,\!{}^*)$ and $\F\sub 2^\vS$,%
   \COMMENT{}
   define the {\it essential core\/} $\F'$ of~$\F$ as
 $$\F' := \{\,\sigma\sm \vS^-\mid \sigma\in\F\,\},$$
 where $\vS^-\sub\vS$ is the set of all separations that are trivial in~$\vS$.%
   \COMMENT{}
   Note that if $\F$ is standard then so is~$\F'$,%
   \COMMENT{}
   since inverses of trivial separations are never trivial. Let us call an $S$-tree $(T,\alpha)$ {\em essential\/} if it is irredundant, tight, and $\alpha(\vec E(T))$ contains no trivial separation.

\begin{THM}\label{thm:essential}{\rm \cite{TreeSets}}
Let $(\vS,\le\,,\!{}^*)$ be a separation system and $\F\sub 2^\vS\!$ a set of~stars.
   \begin{enumerate}[\rm(i)]\itemsep=0pt\vskip-\smallskipamount\vskip0pt
   \item The $\F'$-tangles of~$S$ are precisely its $\F$-tangles.
   \item If $(T,\alpha)$ is any $S$-tree over~$\F$, there is an essential $S$-tree $(T',\alpha')$ over~$\F'$ such that $T'$ is a minor of~$T$ and $\alpha' = \alpha\!\restriction\!\vec E(T')$. Conversely, from any essential $S$-tree over~$\F'$ we can obtain an $S$-tree over~$\F$ by adding leaves, if $\F$ is standard for~$\vS$.
   \end{enumerate}
\end{THM}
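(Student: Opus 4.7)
For~(i), the crux is that any consistent orientation $O$ of~$S$ automatically contains every trivial separation in~$\vS$, as noted after the definition of consistency in Section~\ref{sec:def}. Hence for any $\sigma\in\F$ with essential core $\sigma':=\sigma\sm\vS^-\in\F'$, the conditions $\sigma\sub O$ and $\sigma'\sub O$ are equivalent: one direction follows from $\sigma\supseteq\sigma'$, while the other uses that $\sigma=\sigma'\cup(\sigma\cap\vS^-)$ together with $\sigma\cap\vS^-\sub\vS^-\sub O$. Therefore a consistent orientation of~$S$ avoids~$\F$ if and only if it avoids~$\F'$, giving~(i).

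The converse part of~(ii) is the easier direction, so let me handle it first. Given an essential $S$-tree $(T',\alpha')$ over~$\F'$, at each node $t'$ of~$T'$ pick $\sigma_{t'}\in\F$ with $\sigma_{t'}\sm\vS^-=\alpha'(\vec F_{t'})$. Now for each such $t'$ and each trivial $\vr\in\sigma_{t'}\cap\vS^-$, attach a brand-new leaf $y$ to~$t'$ via a new edge, and extend $\alpha'$ to~$\alpha$ by setting $\alpha(y,t'):=\vr$ and $\alpha(t',y):=\rv$. Since $\F$ is standard and $\vr$ is trivial, the singleton star $\{\rv\}$ associated with~$y$ lies in~$\F$. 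Meanwhile the star at~$t'$ in the enlarged tree becomes $\alpha'(\vec F_{t'})\cup(\sigma_{t'}\cap\vS^-)=\sigma_{t'}\in\F$. Hence the enlarged $S$-tree is over~$\F$, as desired.

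For the forward part of~(ii), begin by pruning $(T,\alpha)$ via Lemma~\ref{prune} to make it irredundant, and then apply the contraction reduction described before Lemma~\ref{onlyedge} to make it tight; both of these are minor operations on which $\alpha$ restricts, and Lemma~\ref{preservele} gives that the result is order-respecting. Next I would iteratively delete every leaf~$x$ of the current tree whose outgoing label $\alpha(\vex)$ is trivial in~$\vS$; at the neighbour~$y$ of~$x$ this operation removes precisely the trivial element $\alpha(\vex)$ from the star $\alpha(\vec F_y)$. The main obstacle, and the step requiring real work, is to show that after finitely many such leaf-deletions (perhaps interleaved with further tightening to keep the tree irredundant and tight) no internal edge of the remaining tree can still carry a trivial label: equivalently, that in a tight irredundant $S$-tree every trivial edge label can be forced into a leaf position. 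This structural claim uses Lemma~\ref{NewLemma} together with order-respecting to control where trivial separations may occur among the edge labels, and is worked out in detail in~\cite{TreeSets}. Once the process terminates, the resulting tree $(T',\alpha')$ has no trivial edge labels, while at every remaining node~$t'$ the star has shrunk by exactly its trivial members, so it equals the essential core of the original $\F$-star and hence lies in~$\F'$. Thus $(T',\alpha')$ is an essential $S$-tree over~$\F'$, with $T'$ a minor of~$T$ and $\alpha'=\alpha\!\restriction\!\vec E(T')$.
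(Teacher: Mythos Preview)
Your approach matches the paper's in all three parts, and parts~(i) and the converse of~(ii) are fine as written. The one place where you diverge is the forward direction of~(ii): you flag the claim ``if some edge carries a trivial label then some leaf edge does'' as the main obstacle, invoke Lemma~\ref{NewLemma}, and defer the details to~\cite{TreeSets}. In fact the paper dispatches this in one line, and Lemma~\ref{NewLemma} is not the right tool. Once the tree is irredundant (hence order-respecting by Lemma~\ref{preservele}), if $\alpha(\ve)$ is trivial for some~$\ve$, take any leaf~$x$ in the component of $T-e$ that $\ve$ points away from; then $\vex\le\ve$, so $\alpha(\vex)\le\alpha(\ve)$, and since the trivial separations are down-closed in~$\vS$, $\alpha(\vex)$ is trivial too. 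So a trivially-labelled leaf always exists as long as any trivially-labelled edge does, and recursive leaf deletion terminates with no trivial labels left. No interleaved re-tightening is needed either: deleting a leaf removes one element from the star at its neighbour, which preserves both irredundancy and antisymmetry of the remaining stars.
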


\proof
(i) is immediate from the fact that $\F$-tangles, being consistent, contain all trivial separations and hence also avoid~$\F'$.%
   \COMMENT{}

For the first statement in (ii), let us start by making the given $S$-tree $(T,\alpha)$ irredundant by pruning it, as in Lemma~\ref{prune}. We then contract edges violating tightness, as explained before Lemma~\ref{onlyedge}. We finally make the resulting tree essential by deleting all its edges that $\alpha$ maps to trivial separations. This can be done recursively by deleting leaves associated with a co-trivial singleton: since $\F$ consists of stars, Lemma~\ref{preservele} implies that any $S$-tree over~$\F$ with an edge mapping to a trivial separation will also have such an edge issuing from a leaf. (Recall that if $\vs$ is trivial then so is every $\vr\le\vs$.) Pruning off leaves recursively in this way%
   \COMMENT{}
   will leave a well-defined tree at the end,%
   \COMMENT{}
   which has the properties desired for~$(T',\alpha')$.

For the second statement in (ii), let $(T,\alpha)$ be an essential $S$-tree over~$\F'$, and consider a node~$t\in T$. As $\alpha(\vec F_t)\in\F'$, there exists $\sigma\in\F$ such that $\sigma\sm\alpha(\vec F_t)$ is a set of trivial separations~$\vs$. For each of these add a new leaf, joining it to $t$ by an edge $\ve$ with $\alpha(\ve) := \vs$.
\endproof

Theorem~\ref{thm:essential} allows us to strengthen%
   \COMMENT{}
   each of the two alternatives in the strong duality theorem from its current version with the given set $\F$ of stars to an `essential' version with~$\F'$ instead. So why didn't we prove this stronger version directly?

The answer is pragmatic: this would have been possible, and we shall indicate in a moment how to do it. But it would have made the proof notationally more technical. As the proof stands, we need to allow inessential $S$-trees, because they can arise in the induction step when we combine two shifted $S$-trees, even if these were essential before the shift.

Indeed, recall what happens to the leaves of an $S$-tree when we shift it, by~$f\shift(\vr)(\vso)$ say. A~leaf, associated with~$\{\rvdash\}$, say, where $r\ne r'\ne s_0$ for simplicity, will be associated in the shifted tree with the star~$\{\rvdash\lor\vso\}$, because $\vr\le\rvdash$. But if $\vrdash < \vso$, as will frequently happen, then this star is a co-trivial singleton, because $\vrdash\land\svo < \vrdash$ as well as $\vrdash\land\svo < \svo < \rvdash$.

The way to overcome this problem is indicated in Footnote~\ref{FNweakerFlinked}, with $S'$ the set of separations in~$S$ that have no trivial orientation in~$\vS$. When we shift a star $\sigma\in\F$, its shift may contain trivial separations, but we could simply delete these to make the shifted $S$-tree essential, as in the proof of Theorem~\ref{thm:essential}(ii). To ensure that it is again over~$\F$, we would need to replace the current requirement in the definition of $\F$-separable, that $f\shift(\vr)(\vso)$ should map to~$\F$ any $\sigma\in\F$ that contains a separation $\vs\ge\vr$, with the requirement that for any such $\sigma\in\F$ we have $f\shift(\vr)(\vso)(\sigma)\cap \vSdash\in\F$ if $f\shift(\!\vr)(\vso)(\vs)\in\vSdash$.%
   \COMMENT{}

\section*{Acknowledgement}
We enjoyed a visit by Fr\'ed\'eric Mazoit to the University of Hamburg in the summer of 2013, when he gave a series of lectures on the bramble-based duality theorems for width parameters developed in~\cite{MazoitPartition,MazoitPushing}. We then tried, unsuccessfully, to apply these to obtain a duality theorem for $k$-blocks. This triggered the development of a similar unified duality theory for tangles instead of brambles, first in graphs and later more generally. Our original goal, a duality theorem for $k$-blocks, was eventually achieved in~\cite{ProfileDuality}, as an application of Theorem~\ref{thm:strong}.

The second author was supported by the Basic Science Research
Program through the National Research Foundation of Korea (NRF)
funded by  the Ministry of Science, ICT \& Future Planning
(2011-0011653)
and also by the Institute for Basic Science (IBS-R029-C1).

\bibliographystyle{abbrv}
\bibliography{collective}

\end{document}